\author{Sergiy Maksymenko}
\title[Fundamental groups of orbits]
{Structure of the fundamental groups of orbits of smooth functions on surfaces}
\address{Topology dept. \\ Institute of Mathematics of NAS of Ukraine \\ Te\-re\-shchen\-kivska st. 3, Kyiv, 01601 Ukraine}
\email{maks@imath.kiev.ua}
\urladdr{http://www.imath.kiev.ua/~maks}
\keywords{wreath product, surface}
\subjclass[2000]{
57S05, 
20E22, 
}
\newtheorem{theorem}[subsection]{Theorem}
\newtheorem{lemma}[subsection]{Lemma}
\newtheorem{proposition}[subsection]{Proposition}
\newtheorem{remark}[subsection]{Remark}
\newtheorem*{remark*}{Remark}
\newtheorem{definition}[subsection]{Definition}
\newenvironment{axiom}[1]
{
\par\smallskip\noindent
{\bf Axiom #1.}\begin{it}
}
{
\end{it}\smallskip
 }
\newcommand\testshape{family=\f@family; series=\f@series; shape=\f@shape.}
\def\myemphInternal#1{\if n\f@shape%
\begingroup\itshape #1\endgroup\/%
\else\begingroup\bfseries #1\endgroup%
\fi}
\def\myemph{\futurelet\testchar\MaybeOptArgmyemph}
\def\MaybeOptArgmyemph{\ifx[\testchar \let\next\OptArgmyemph
                 \else \let\next\NoOptArgmyemph \fi \next}
\def\OptArgmyemph[#1]#2{\index{#1}\myemphInternal{#2}}
\def\NoOptArgmyemph#1{\myemphInternal{#1}}
\newcommand\RRR{{\mathbb R}}
\newcommand\ZZZ{{\mathbb Z}}
\newcommand\FFF{{\mathcal F}}
\newcommand\FF{{\mathcal F}}
\newcommand\eps{\varepsilon}
\newcommand\id{\mathrm{id}}          
\newcommand\Int{\mathrm{Int}}        
\newcommand\wrm[1]{\mathop{\wr}\limits_{\ZZZ_{#1}}}
\newcommand\AxCrPt{{\text{\rm(L)}}}
\newcommand\AxBd{{\text{\rm(B)}}}
\newcommand\Bman{B}
\newcommand\Mman{M}
\newcommand\Nman{N}
\newcommand\Pman{P}
\newcommand\Uman{U}
\newcommand\Vman{V}
\newcommand\Wman{W}
\newcommand\Xman{X}
\newcommand\Yman{Y}
\newcommand\Zman{Z}
\newcommand\Circle{S^1}            
\newcommand\Disk{D^2}              
\newcommand\MobiusBand{Mo}         
\newcommand\Sphere{S^2}            
\newcommand\Torus{T^2}             
\newcommand\Orb{\mathcal{O}}        
\newcommand\Stab{\mathcal{S}}       
\newcommand\Diff{\mathcal{D}}       
\newcommand\Aut{\mathrm{Aut}}       
\newcommand\Maps[2]{#2^{#1}}        
\newcommand\DiffId{\Diff_{\id}}     
\newcommand\StabId{\Stab_{\id}}     
\newcommand\Ci[2]{\mathcal{C}^{\infty}(#1,#2)}               
\newcommand\Cid[2]{\mathcal{C}_{\partial}^{\infty}(#1,#2)}   
\newcommand\func{f}
\newcommand\dif{h}
\newcommand\hdif{\widehat{\dif}}
\newcommand\gdif{g}
\newcommand\hgdif{\widehat{\gdif}}
\newcommand\DiffM{\Diff(\Mman)}
\newcommand\DiffIdM{\DiffId(\Mman)}
\newcommand\DiffMX{\Diff(\Mman, \Xman)}
\newcommand\DiffIdMX{\DiffId(\Mman, \Xman)}
\newcommand\Morse{\mathrm{Morse}}
\newcommand\aFlow{\mathbf{F}}
\newcommand\classP{\mathcal{P}}
\newcommand\classR{\mathcal{R}}
\newcommand\aGrp{A}
\newcommand\bGrp{B}
\newcommand\Stabilizer[1]{\Stab(#1)}             
\newcommand\StabilizerId[1]{\StabId(#1)}         
\newcommand\StabilizerIsotId[1]{\Stab'(#1)}      
\newcommand\StabilizerInv[1]{\Stab_{\mathrm{inv}}(#1)}  
\newcommand\Orbit[1]{\Orb(#1)}                   
\newcommand\OrbitPathComp[2]{\Orb_{#2}(#1)}      
\newcommand\SingularSet[1]{\Sigma_{#1}}             
\newcommand\KronrodReebGraph[1]{\Gamma(#1)}                   
\newcommand\AutKRGraphStab[1]{\mathbf{G}}             
\newcommand\fStab{\Stabilizer{\func}}             
\newcommand\fStabId{\StabilizerId{\func}}         
\newcommand\fStabIsotId{\StabilizerIsotId{\func}}  
\newcommand\fStabX{\Stabilizer{\func,\Xman}}             
\newcommand\fStabIdX{\StabilizerId{\func, \Xman}}         
\newcommand\fStabIsotIdX{\StabilizerIsotId{\func, \Xman}}  
\newcommand\fOrb{\Orbit{\func}}                     
\newcommand\fOrbComp{\OrbitPathComp{\func}{\func}}  
\newcommand\fOrbX{\Orbit{\func, \Xman}}                     
\newcommand\fOrbCompX{\OrbitPathComp{\func, \Xman}{\func}}  
\newcommand\fSing{\SingularSet{\func}}                
\newcommand\fKRGraph{\KronrodReebGraph{\func}}            
\newcommand\fKRAut{\AutKRGraphStab{\func}(\func)}         
\newcommand\homStabToAutG{\lambda}     
\newcommand\hMman{\widehat{\Mman}}  
\newcommand\crLev{K}
\newcommand\crNbh{\Nman}
\newcommand\iSurf{\Xman}
\newcommand\nSurf{\Yman}
\newcommand\hiSurf{S}
\newcommand\hnSurf{T}
\newcommand\usim{\underset{\crLev}{\sim}}
\newcommand\dsim{\underset{\partial\crNbh}{\sim}}
\newcommand\bComp{B}
\newcommand\hbComp{\widehat{\bComp}}
\newcommand\hcrNbh{\widehat{\crNbh}}
\newcommand\bCyl{C}
\newcommand\dCyl{Q}
\newcommand\hdCyl{\widetilde{\dCyl}}
\newcommand\uTriv{\mathcal{T}(\func,\crLev)}
\newcommand\dTriv{\mathcal{T}(\func,\partial\crNbh)}
\newcommand\isoStabfB{\psi}
\newcommand{\qind}{q}
\begin{document}

\begin{abstract}
Let $M$ be a smooth compact connected surface, $P$ be either the real line $\mathbb{R}$ or the circle $S^1$ and $f:M\to P$ be a Morse map.
Denote by $\mathcal{S}(f)$ and $\mathcal{O}(f)$ the corresponding stabilizer and orbit of $f$ with respect to the right action of the group $\mathcal{D}(M)$ of diffeomorphisms of $M$.
In a series of papers the author described homotopy types of $\mathcal{S}(f)$ and computed higher homotopy groups of $\mathcal{O}(f)$.
The present paper describes the structure of the remained fundamental group $\pi_1 \mathcal{O}(f)$ for the case when $M$ is orientable and differs from $2$-sphere and $2$-torus.

The result holds as well for a larger class of smooth maps $f:M\to P$ having the following property: the germ of $f$ at each of its critical points is smoothly equivalent to a homogeneous polynomial $\mathbb{R}^2\to\mathbb{R}$ without multiple factors.
\end{abstract}

\maketitle

\section{Introduction}
Let $\Mman$ be a smooth compact connected surface and $\Pman$ be either the real line $\RRR$ or the circle $\Circle$.
For each closed subset $\Xman\subset\Mman$ let $\DiffMX$ be the group of $C^{\infty}$-diffeomorphisms fixed on $\Xman$ and 
\begin{align*}
\fStabX &= \{\dif\in\DiffMX \mid \func\circ \dif=\func\}, 
& 
\fOrbX &=\{\func\circ\dif\mid \dif\in\DiffMX\}
\end{align*}
be respectively the \textit{stabilizer} and the \textit{orbit} of $\func\in C^{\infty}(\Mman,\Pman)$ under the standard right action of $\DiffMX$ on $C^{\infty}(\Mman,\Pman)$.

We will endow $\DiffMX$ and $\Ci{\Mman}{\Pman}$ with $C^{\infty}$ Whitney topologies.
These topologies induce certain topologies on the spaces $\fStabX$ and $\fOrbX$.
Denote by $\DiffIdMX$ and $\fStabIdX$ the identity path components of $\DiffMX$ and $\fStabX$ respectively and by $\fOrbComp$ the path component of $\fOrb$ containing $\func$.
If $\Xman=\varnothing$, we will omit $\Xman$ from notation, e.g.\! we write $\DiffM$ instead of $\Diff(\Mman,\varnothing)$, and so on.

In~\cite{Maksymenko:AGAG:2006, Maksymenko:MFAT:2009, Maksymenko:MFAT:2010, Maksymenko:ProcIM:ENG:2010, Maksymenko:UMZ:ENG:2012} for a large class of smooth maps $\func:\Mman\to\Pman$ and certain ``$\func$-adopted submanifolds'' $\Xman\subset\Mman$ the author described the homotopy types of $\fStabX$, computed the higher homotopy groups of $\fOrbX$, and obtained certain information about $\pi_1\fOrbX$, see Theorem~\ref{th:right_action_props} below.

The main result of this paper, Theorem~\ref{th:structure_of_pi1_fOrb}, gives a complete description of the structure of $\pi_1\fOrbX$ for the case when $\Mman$ is orientable and differs from $2$-sphere and $2$-torus.
It expresses $\pi_1\fOrbX$ in terms of special wreath product with $\ZZZ$ over some finite cyclic groups.

\medskip

\subsection{Preliminaries}
Let $\Cid{\Mman}{\Pman}$ be the subset of $\Ci{\Mman}{\Pman}$ consisting of maps $\func$ satisfying the following axiom:
\begin{axiom}{\AxBd}
The map $\func:\Mman\to\Pman$ takes constant values on connected components of $\partial\Mman$, and the set $\fSing$ of critical points of $\func$ 
is contained in the interior $\Int{\Mman}$.
\end{axiom}

Denote by $\Morse(\Mman,\Pman) \subset \Cid{\Mman}{\Pman}$ the subset consisting of \myemph{Morse} maps, i.e.\! maps having only non-degenerate critical points.
It is well known that $\Morse(\Mman,\Pman)$ is open and everywhere dense in $\Cid{\Mman}{\Pman}$.

Let also $\FF(\Mman,\Pman)$ be the subset of $\Cid{\Mman}{\Pman}$ consisting of maps $\func$ satisfying the following additional axiom:
\begin{axiom}{\AxCrPt}
For each critical point $z$ of $\func$ the germ of $\func$ at $z$ is smoothly equivalent to some homogeneous polynomial $\func_z:\RRR^2\to\RRR$ without multiple factors.
\end{axiom}
By Morse lemma a non-degenerate critical point of a map $\func:\Mman\to\Pman$ is smoothly equivalent to a homogeneous polynomial $\pm x^2\pm y^2$ which, evidently, has no multiple factors, and so satisfies $\AxCrPt$.
This implies that
\[
 \Morse(\Mman,\Pman) \ \subset \ \FF(\Mman,\Pman). 
\]

Notice that every critical point satisfying Axiom~\AxCrPt\ is isolated.
Moreover such a point $z$ is non-degenerate if and only if the corresponding homogeneous polynomial $\func_z$ has degree $\geq 3$, see e.g.~\cite[\S7]{Maksymenko:MFAT:2009}.

\begin{definition}\label{defn:f_adopted_subsurface}{\rm\cite{Maksymenko:UMZ:ENG:2012}}.
Let $\func\in\FFF(\Mman,\Pman)$, $\Xman \subset \Mman$ be a compact submanifold, not necessarily connected, and whose connected components may have distinct dimensions. 
Let also $\Xman^{i}$, $i=0,1,2$, be the union of connected components of $\Xman$ of dimension $i$.
Then $\Xman$ will be called \myemph{$\func$-adopted} if the following conditions hold true:
\begin{enumerate}
\item[(a)]
$\Xman\cap\fSing \subset \Xman^{0}\cup \Int\Xman^2$;
\item[(b)]
$\func$ takes constant value at each connected component of $\Xman^{1} \cup \partial\Xman^{2}$.
\end{enumerate}
\end{definition}

The following lemma gives examples of $\func$-adopted submanifolds.
We left it to the reader.
\begin{lemma}
Let $\Xman, \Yman\subset\Mman$ be two submanifolds.

{\rm 1)} If $\Xman$ is $\func$-adopted, then so is every connected component of $\Xman$.

{\rm 2)} If $\Xman$ and $\Yman$ are $\func$-adopted and disjoint, then $\Xman\cup\Yman$ is $\func$-adopted as well.

{\rm 3)} Suppose every connected component of $\Xman$ has dimension $2$ю
Then $\Xman$ is $\func$-adopted if and only if the restriction $\func|_{\Xman}$ satisfies axioms \AxBd\ and \AxCrPt.

{\rm 4)} Let $a,b\in\Pman$ be two distinct regular values of $\func\in\FFF(\Mman,\Pman)$, and $[a,b] \subset \Pman$ the closed segment between them.
Then $\Xman = \func^{-1}[a,b]$ and any family of connected components of $\Xman$ is $\func$-adopted.
\qed
\end{lemma}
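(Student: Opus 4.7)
The plan is to verify parts (1)--(4) in order, bootstrapping the later parts from the earlier ones. Parts (1) and (2) are purely combinatorial: for (1), a connected component $Z\subset\Xman$ has a single dimension $i$, so $Z^{i}=Z$ and the other strata of $Z$ are empty, and conditions (a) and (b) for $Z$ follow by intersecting the corresponding conditions for $\Xman$ with $Z$; for (2), disjointness gives $(\Xman\cup\Yman)^{j}=\Xman^{j}\sqcup\Yman^{j}$ for every $j$, so both axioms split along the decomposition.

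For part (3), assuming every component of $\Xman$ is $2$-dimensional forces $\Xman^{0}=\Xman^{1}=\varnothing$ and $\Xman^{2}=\Xman$. Then (a) says $\func$ has no critical points on $\partial\Xman$, and (b) says $\func$ is constant on each component of $\partial\Xman$: together these are exactly Axiom~\AxBd\ for $\func|_{\Xman}$. Axiom~\AxCrPt\ for the restriction is then automatic, since any critical point of $\func|_{\Xman}$ lies in $\Int\Xman$ and so is also a critical point of $\func$, whose germ satisfies \AxCrPt\ by the standing hypothesis $\func\in\FFF(\Mman,\Pman)$.

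For part (4), since $a,b$ are regular values of $\func$ the preimage $\Xman=\func^{-1}[a,b]$ is a compact $2$-submanifold all of whose components are $2$-dimensional, so part (3) applies. The boundary $\partial\Xman$ decomposes as the union of components of $\func^{-1}(a)$, of $\func^{-1}(b)$, and of those components of $\partial\Mman$ whose $\func$-value lies in $[a,b]$; on each of the three kinds $\func$ is constant (using Axiom~\AxBd\ for $\func$ itself on the third kind), and every critical point of $\func|_{\Xman}$ lies in the open preimage $\func^{-1}(a,b)$. This yields Axiom~\AxBd\ for $\func|_{\Xman}$, so $\Xman$ is $\func$-adopted, and parts (1)--(2) then upgrade this to any disjoint family of components of $\Xman$.

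The only mildly delicate point is the description of $\partial\Xman$ in part~(4): one has to observe that a boundary circle of $\Mman$ lying inside $\func^{-1}[a,b]$ contributes to $\partial\Xman$ rather than to its interior, and that such a circle already carries a constant value of $\func$ by Axiom~\AxBd\ applied to $\func$ on $\Mman$. Everything else reduces to unpacking the definition of $\func$-adopted.
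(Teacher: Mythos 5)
The paper gives no proof of this lemma (it is explicitly left to the reader), and your argument is the intended direct unpacking of Definition~\ref{defn:f_adopted_subsurface}: parts (1)--(3) are correct as written, including the key observation that for a $2$-dimensional $\Xman$ the critical points of $\func|_{\Xman}$ on $\partial\Xman$ coincide with those of $\func$, so axiom \AxBd\ for the restriction is exactly conditions (a)--(b), while \AxCrPt\ is inherited. The one caveat is in part (4): if a boundary circle $C\subset\partial\Mman$ has $\func(C)\in\{a,b\}$ and $\func$ leaves $[a,b]$ along a deleted collar of $C$, then $C$ is a $1$-dimensional connected component of $\func^{-1}[a,b]$, so your claim that every component is $2$-dimensional (and hence that part (3) applies to all of $\Xman$) fails in this degenerate case; the conclusion survives, since such a $C$ carries a constant value of $\func$ and contains no critical points, hence is $\func$-adopted directly from clauses (a)--(b), after which part (2) reassembles the pieces.
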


Let $\func\in\FFF(\Mman,\Pman)$ and $\Xman \subset \Mman$ be an $\func$-adopted submanifold.
Denote 
\begin{align}\label{equ:StabIsotId}
\fStabIsotId &= \fStab \cap \DiffIdM,
&
\fStabIsotIdX &= \fStab \cap \DiffIdMX.
\end{align}
In a sequel all the homotopy groups of $\Orbit{\func,\Xman}$ will have $\func$ as a base point, and so the notation $\pi_k\fOrbX$ will always mean $\pi_k(\fOrbX, \func)$.
Notice that the latter group is also isomorphic with $\pi_k(\OrbitPathComp{\func,\Xman}{\func}, \func)$.
The following theorem summarizes the results concerning the homotopy types of $\fStabIdX$ and $\fOrbCompX$.

\begin{theorem}\label{th:right_action_props}{\rm \cite{Maksymenko:AGAG:2006, Maksymenko:MFAT:2010, Maksymenko:ProcIM:ENG:2010, Maksymenko:UMZ:ENG:2012}.}
Let $\func\in\FFF(\Mman,\Pman)$ and $\Xman \subset \Mman$ be an $\func$-adopted submanifold.
Then the following statements hold true.

{\rm 1)}
$\fOrbCompX = \OrbitPathComp{\func, \Xman\cup\partial\Mman}{\func}$.

{\rm 2)}
The map $p:\DiffMX \longrightarrow \fOrbX$ defined by $p(\dif) = \func \circ \dif$ is a Serre fibration.

{\rm 3)}
Suppose that either $\func$ has at least one critical point being {\bfseries not a non-degenerate local extreme} or $\Mman$ is non-orientable.
Then $\fStabId$ is contractible, $\pi_n\fOrb = \pi_n\Mman$ for $n\geq3$, $\pi_2\fOrbComp=0$, and we also have the following short exact sequence:
\begin{equation}\label{equ:pi1Of_exact_sequence}
 1 \longrightarrow \pi_1\DiffIdM \xrightarrow{~~~p~~~} \pi_1\fOrb \xrightarrow{~~~\partial~~~} \pi_0\fStabIsotId \longrightarrow 1.
\end{equation}
If $\Mman$ is orientable and distinct from $\Sphere$ and $\Torus$ then $\pi_1\fOrb$ is solvable.

{\rm 4)}
Suppose that the Euler characteristic $\chi(\Mman)$ is less than the number of points in $\Xman$.
This holds for instance when either $\dim\Xman>0$ or $\chi(\Mman)<0$.
Then both $\DiffIdMX$ and $\fStabIdX$ are contractible, $\pi_i\fOrbX=0$ for $i\geq2$, and we have an isomorphism:
\[
 \pi_1\Orbit{\func, \Xman} \ \cong \ \pi_0\StabilizerIsotId{\func,\Xman}.
\]
Moreover, there exist finitely many mutually disjoint $\func$-adopted subsurfaces $\Bman_1,\ldots,\Bman_n \subset \Mman\setminus(\Xman^{1} \cup \Xman^{2})$ each diffeomorphic either to a $2$-disk $\Disk$, or a cylinder $S^1\times I$, or a M\"obius band $\MobiusBand$, and such that if we denote
\begin{align*}
\func_i &:= \func|_{\Bman_i}: \Bman_i \to \Pman,  &
\hat{\Bman_i} &:= (\Bman_i\cap\Xman^0) \cup \partial\Bman_i
\end{align*}
for $i=1,\ldots,n$, then the following isomorphisms hold:
\[
\pi_1\fOrbCompX \ \cong \ \pi_0\fStabIsotIdX \ \cong \
\mathop{\times}_{i=1}^{n} \pi_0\StabilizerIsotId{\func_i, \,\hat{\Bman_i}\,} \ \cong \
\mathop{\times}_{i=1}^{n} \pi_0\OrbitPathComp{\func_i, \,\hat{\Bman_i}\,}{\func_i}.
\]

{\rm 5)}
Let $\Uman$ be any open neighbourhood of $\Xman^1\cup \Xman^2$.
Then there exists an $\func$-adopted submanifold $\Nman\subset\Mman$ such that every connected component of $\Nman$ has dimension $2$, 
\begin{align*}
&\Xman^0 \cap\Nman = \varnothing,&
&\Xman^1\cup \Xman^2 \subset \Int{\Nman} \subset\Nman \subset \Uman
\end{align*}
and the inclusion $\StabilizerIsotId{\func,\Xman^0 \cup \Nman} \subset\StabilizerIsotId{\func, \Xman}$ is a homotopy equivalence.
\end{theorem}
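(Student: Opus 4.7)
The theorem is really a compilation in which parts (1)--(4) are direct transcriptions from the author's earlier series of papers and part (5) is the geometric construction that still needs genuine work. My plan for the first four parts is simply to verify that the hypotheses match: the class $\FFF(\Mman,\Pman)$ and the notion of $\func$-adoptedness used here are the same as in the cited papers, so (1) follows from the observation that every diffeomorphism $\dif\in\fStab$ isotopic to $\id_{\Mman}$ through $\fStab$ already preserves $\partial\Mman$ set-wise (because $\func|_{\partial\Mman}$ is constant on components), (2) is the Serre fibration theorem for the right $\Diff$-action proved in \cite{Maksymenko:AGAG:2006}, (3) follows by combining the contractibility of $\fStabIdX$ with the long exact sequence of~$p$, and (4) is the Euler-characteristic decomposition of $\fStabIsotIdX$ into factors corresponding to a pants-type cut of $\Mman$.

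For part (5), which is the main technical content, I would proceed as follows. Cover the compact set $\Xman^1\cup\Xman^2$ by a finite union of ``model'' neighbourhoods built from $\func$. For a connected component $\Yman\subset\Xman^2$, let $c_1,\dots,c_k$ be the (finitely many) values taken by $\func$ on components of $\partial\Yman$, choose an $\eps>0$ so small that each interval $(c_j-\eps,c_j+\eps)$ consists of regular values of $\func$ except possibly $c_j$ itself, and glue to $\Yman$ the components of $\func^{-1}\bigl((c_j-\eps,c_j+\eps)\bigr)$ incident to the relevant boundary components of $\Yman$. For a connected component $\Yman\subset\Xman^1$, $\func$ is constant equal to some $c$ on $\Yman$, and I take the component of $\func^{-1}(c-\eps,c+\eps)$ containing $\Yman$. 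Taking $\eps$ uniformly small and using compactness, the resulting union $\Nman$ is contained in $\Uman$, is disjoint from the finite set $\Xman^0$, and is $\func$-adopted because $\partial\Nman$ lies on regular level sets of $\func$.

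The principal obstacle is the homotopy-equivalence claim $\StabilizerIsotId{\func,\Xman^0\cup\Nman}\hookrightarrow\StabilizerIsotId{\func,\Xman}$. I would attack this via the Serre fibration of part (2) applied to the restriction map
\[
 \StabilizerIsotId{\func,\Xman}\;\longrightarrow\;\Orb\bigl(\func|_{\overline{\Nman\setminus\Xman}}\bigr),
\]
where the target is the orbit of the restriction of $\func$ to the closure of $\Nman\setminus\Xman$ under diffeomorphisms fixing $\Xman\cap\Nman$ and the outer boundary $\partial\Nman\setminus\Xman$. Since every connected component of $\overline{\Nman\setminus\Xman}$ is a cylinder or disk between two regular levels of $\func$ (or is attached to $\Xman^2$ along a regular level), the corresponding orbit space is contractible by part (4). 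Hence the fiber, which is precisely $\StabilizerIsotId{\func,\Xman^0\cup\Nman}$ (once one observes that fixing $\Xman^1\cup\Xman^2$ together with $\Nman$ is equivalent to fixing $\Xman^0\cup\Nman$ because $\Xman^1\cup\Xman^2\subset\Int\Nman$), is homotopy equivalent to the total space.

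The step I expect to be subtle is the verification that the ``outer'' orbit space above is genuinely contractible in the sense needed, i.e.\! that any isotopy of $\func|_{\overline{\Nman\setminus\Xman}}$ in its orbit can be lifted to an isotopy of diffeomorphisms of $\Mman$ fixing $\Xman$. This is where axioms $\AxBd$ and $\AxCrPt$ combine with the $\func$-adoptedness of $\Nman$: the boundary components of $\overline{\Nman\setminus\Xman}$ are either regular level circles of $\func$ or pieces of $\Xman$, so a standard flow along a gradient-like vector field of $\func$ (normalized to be zero near $\Xman$ and along $\partial\Mman$) produces the required lift, and the fact that $\Nman$ avoids $\Xman^0$ ensures we never have to flow across a point where $\Xman$ and $\fSing$ meet.
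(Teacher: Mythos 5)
The paper's own ``proof'' of this theorem is a list of citations to the author's earlier articles (statements 1 and 5 to [UMZ~2012, Cor.~2.1 and 6.2], statement 2 to Sergeraert and its extensions, statement 3 to [AGAG~2006] and [KRGraphs~2013], statement 4 to [MFAT~2010] and [UMZ~2012]); no argument is reproduced in the text. Your proposal therefore attempts more than the paper does, but measured against what a proof would require it has real gaps. In part (3) the entire difficulty is the contractibility of $\fStabId$; saying that (3) ``follows by combining the contractibility of $\fStabId$ with the long exact sequence of $p$'' assumes the hard half and only recovers the exact sequence \eqref{equ:pi1Of_exact_sequence}, and the solvability assertion (which in the paper comes from a separate article) is not addressed at all. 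Part (1) is also not the statement you argue for: it concerns path components of \emph{orbits}, and the fact that elements of $\fStab$ permute boundary components does not by itself let you replace a path in $\Orbit{\func,\Xman}$ joining $\func$ to a given $\gfunc$ by a path in $\Orbit{\func,\Xman\cup\partial\Mman}$.

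The decisive gap is in your argument for part (5). Every $\dif\in\StabilizerIsotId{\func,\Xman}$ satisfies $\func\circ\dif=\func$ and fixes $\Xman$ pointwise, hence preserves each component of $\Nman$ and of $\overline{\Nman\setminus\Xman}$ and restricts there to an $\func$-preserving diffeomorphism; consequently the ``restriction map'' $\StabilizerIsotId{\func,\Xman}\to\Orb\bigl(\func|_{\overline{\Nman\setminus\Xman}}\bigr)$ you propose is the \emph{constant} map to the base point, and no fibration argument can be run with it. The map you actually need lands in the stabilizer (equivalently, in a space of $\func$-preserving embeddings) of the collar pieces $\overline{\Nman\setminus\Xman}$, and for it you must establish two things that do not follow from part (2), which concerns the map $\DiffMX\to\fOrbX$ on the full diffeomorphism group: that this restriction has the homotopy lifting property, and that the identity component of its target is contractible. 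The latter is where Lemma~\ref{lm:pi0StabIsotId_simple_cases}(1)-type vanishing for critical-point-free cylinders with fixed boundary enters (part (4) alone only gives $\pi_1\cong\pi_0\StabilizerIsotId{\cdot}$, which is generally nonzero -- that nonvanishing is the subject of the whole paper). Your geometric construction of $\Nman$ is essentially right, but the homotopy-equivalence step as written does not go through.
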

\begin{proof}
Statements 1) and 5) are proved in~\cite[Corollaries~2.1 \& 6.2]{Maksymenko:UMZ:ENG:2012} respectively.

Statement 2) is a general result initially established in the paper by F.~Sergeraert~\cite{Sergeraert:ASENS:1972} for smooth functions of \textit{finite codimension} on arbitrary closed manifolds.
In particular, all singularities satisfying Axiom~\AxCrPt\ have finite codimensions, \cite[Lemma~12]{Maksymenko:ProcIM:ENG:2010}.
This covers the case $\Xman=\varnothing$.
The proof for $\Xman=\fSing$ was given in~\cite[\S11]{Maksymenko:AGAG:2006}, and for arbitrary $\func$-adopted submanifold $\Xman$ in~\cite[Theorem~5.1]{Maksymenko:UMZ:ENG:2012}.

Statement 3) is proved in~\cite[Theorems~1.3, 1.5]{Maksymenko:AGAG:2006} for Morse maps, and extended to the class $\FFF(\Mman,\Pman)$ in~\cite{Maksymenko:ProcIM:ENG:2010}.
Solvability result is obtained in~\cite{Maksymenko:KRGraphs:2013}.

Statement 4) was initially established in~\cite[Theorem~1.7]{Maksymenko:MFAT:2010} for $\Xman=\varnothing$, and extended to the general case in~\cite[Theorem~2.4]{Maksymenko:UMZ:ENG:2012}.
\end{proof}

\subsection{Wreath products $A\wrm{m}\ZZZ$ and $A\wr\ZZZ_m$}
Let $A$ be any group and $\ZZZ_m$, $m\geq1$, be a finite cyclic group of order $m$.
Denote by $\Maps{\ZZZ_m}{A}$ the set of \textit{all maps} $\ZZZ_m\to A$ (being not necessarily homomorphisms).
Then $\ZZZ_m$ naturally acts on $\Maps{\ZZZ_m}{A}$ from the right and therefore one can define the corresponding semidirect product $\Maps{\ZZZ_m}{A} \rtimes \ZZZ_m$ which is denoted by \[A \wr \ZZZ_m\] and called \emph{wreath product} of $A$ and $\ZZZ_m$.

\medskip

More generally, notice that the group $\ZZZ$ acts on $\ZZZ_{m}$ by the rule
\[ z * k = z+k \mod m\]
for $z\in \ZZZ$ and $k\in\ZZZ_{m}$.
This action induces a right action of $\ZZZ$ on $\Maps{\ZZZ_m}{A}$ and therefore one can define the corresponding semidirect product $\Maps{\ZZZ_m}{A} \rtimes \ZZZ$ which will be denoted by
\[
\aGrp\wrm{m}\ZZZ
\]
and called the \myemph{wreath product} of $A$ and $\ZZZ$ over $\ZZZ_{m}$. 

Thus $\aGrp\wrm{m}\ZZZ$ is the set $\Maps{\ZZZ_m}{A} \times \ZZZ$ with the multiplication defined as follows.
Let $(\alpha,a), (\beta,b)\in \Maps{\ZZZ_m}{A} \times \ZZZ$.
Define $\gamma: \ZZZ_m \to A$ by the formula $\gamma(i) = \alpha(i+b)\cdot\beta(i)$, $i\in\ZZZ_m$, where $\cdot$ is the multiplication in $A$.
Then, by definition, the product $(\alpha,a)$ and $(\beta,b)$ in $\aGrp\wrm{m}\ZZZ$ is
\[
(\alpha,a) (\beta,b) := (\gamma, a+b).
\]
If $\epsilon: \ZZZ_m \to A$ is the constant map into the unit of $A$, then $(\epsilon, 0)$ is the unit of $\aGrp\wrm{m}\ZZZ$.

Evidently, for $m=1$ the group $\aGrp\wrm{m}\ZZZ$ is isomorphic with the direct product $A \times \ZZZ$.
Also if $A=\{1\}$, then $\aGrp\wrm{m}\ZZZ \cong \ZZZ$ for all $m\geq1$.
Finally notice that there is a natural \textit{epimorphism}
\[
q: A\wrm{m}\ZZZ \longrightarrow A\wr\ZZZ_m,
\qquad
q(\alpha, n) = (\alpha, n\ \mathrm{mod} \ m).
\]

\subsection{Action of $\fStab$ on the Kronrod-Reeb graph}
For $\func\in\FFF(\Mman,\Pman)$ denote by $\fKRGraph$ the {\em Kronrod-Reeb graph} of $\func$, i.e. the factor-space of $\Mman$ obtained by shrinking every connected component of every level-set $\func^{-1}(c)$ of $\func$ to a point.
This graph is very useful for understanding the structure of $\func$, see e.g.~\cite{Kulinich:MFAT:1998, BolsinovFomenko:1997, Kudryavtseva:MatSb:1999, Sharko:UMZ:2003}.

Notice that there is a natural action of $\fStab$ on $\fKRGraph$ defined as follows.
Let $\dif\in\fStab$, so $\func\circ\dif=\func$.
Then $\dif(\func^{-1}(c))=\func^{-1}(c)$ for all $c\in\Pman$.
In particular, $\dif$ interchanges connected components of $\func^{-1}(c)$ being points of $\fKRGraph$, and therefore it yields a certain homeomorphism $\homStabToAutG(\dif)$ of $\fKRGraph$, such that the correspondence $\dif\mapsto\homStabToAutG(\dif)$ is a homomorphism $\homStabToAutG:\fStab\to\Aut(\fKRGraph)$ into the group of all automorphisms of $\fKRGraph$.
Let
\[
\fKRAut :=  \homStabToAutG(\fStabIsotId)
\]
be the group of automorphisms of $\fKRGraph$ induced by isotopic to the identity diffeomorphisms of $\Mman$ preserving $\func$.

\begin{definition}{\rm\cite{Maksymenko:KRGraphs:2013}.}
Let $\classR$ be the minimal class of all finite groups satisfying the following conditions:
\begin{enumerate}
\item
the unit group $\{1\}$ belongs to $\classR$;
\item
if $\aGrp,\bGrp\in\classR$ then $\aGrp\times \bGrp\in\classR$;
\item 
if $\aGrp\in\classR$ and $m\geq1$, then $\aGrp\wr\ZZZ_m\in\classR$.
\end{enumerate}
\end{definition}

\begin{theorem}\label{th:characterization_G}{\rm\cite{Maksymenko:KRGraphs:2013}}.
Let $\Mman$ be a compact orientable surface distinct from $\Sphere$ and $\Torus$.
Then the class $\classR$ coincides with each of the following classes of groups:
\begin{align*}
&\{\, \fKRAut \,\mid\, \func\in\Morse(\Mman,\Pman) \,\}, & 
&\{\, \fKRAut \,\mid\, \func\in\FFF(\Mman,\Pman) \,\}.
\end{align*}
\end{theorem}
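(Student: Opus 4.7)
The plan is to establish two inclusions separately, from which the asserted equalities follow, together with the obvious containment $\mathrm{Morse}(\Mman,\Pman)\subset\FFF(\Mman,\Pman)$. Concretely, I would prove
\[
 \{\fKRAut \mid \func\in\FFF(\Mman,\Pman)\} \ \subseteq \ \classR \ \subseteq \ \{\fKRAut \mid \func\in\Morse(\Mman,\Pman)\}.
\]

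For the realization inclusion $\classR\subseteq\{\fKRAut\mid\func\in\Morse\}$, I would induct on the construction rules defining $\classR$. \emph{Base case:} take any Morse function on $\Mman$ with pairwise distinct critical values and a generic Kronrod--Reeb tree with no symmetry realizable by an isotopic-to-identity diffeomorphism; then $\fKRAut=\{1\}$. \emph{Direct product step:} given Morse functions $\func_A,\func_B$ on two disjoint $\func$-adopted subsurfaces $\Bman_A,\Bman_B\subset\Mman$ realizing $\aGrp$ and $\bGrp$, extend by a function without symmetry outside to obtain $\func$ with $\fKRAut\cong \aGrp\times\bGrp$. \emph{Wreath product step $\aGrp\wr\ZZZ_m$:} choose an orientation-preserving diffeomorphism $\tau:\Mman\to\Mman$ of order $m$ isotopic to the identity and a fundamental domain $\Bman\subset\Mman$ disjoint from its $\tau$-translates, put a Morse function realizing $\aGrp$ inside $\Bman$, and transport it by $\tau$; equivariant smoothing produces $\func$ with $\fKRAut\cong\aGrp\wr\ZZZ_m$. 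The hypothesis $\Mman\neq\Sphere,\Torus$ is used here to guarantee that $\Mman$ carries such cyclic rotations isotopic to the identity with disjoint fundamental domains of any desired number.

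For the structural inclusion $\{\fKRAut\mid\func\in\FFF\}\subseteq\classR$, I would analyze the action of $\fKRAut$ on the finite graph $\fKRGraph$. The first step is to observe that every $\dif\in\fStabIsotId$, being isotopic to $\id_{\Mman}$, preserves orientation of $\Mman$ and fixes the boundary components; consequently $\homStabToAutG(\dif)$ preserves the cyclic order of edges at each vertex of $\fKRGraph$ coming from a critical level component, and fixes every edge of $\fKRGraph$ whose corresponding annulus in $\Mman$ is non-separating. Combined with $\Mman\neq\Sphere,\Torus$, this forces $\fKRAut$ to fix a vertex or an edge of $\fKRGraph$. The second step is the inductive decomposition: cutting at a fixed vertex $v$ gives a family of ``branches'' $\Gamma_1,\ldots,\Gamma_r$ on each of which $\fKRAut$ acts, possibly permuting some branches cyclically. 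Branches in an orbit of length $m$ contribute a factor of the form $\aGrp'\wr\ZZZ_m$, where $\aGrp'$ is the group induced on one of them, while independent orbits contribute direct factors; this is exactly the closure operations defining $\classR$. Induction on the number of edges of $\fKRGraph$, with base case being the trivial graph realizing $\{1\}$, completes the argument.

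The main obstacle is the second step of the structural inclusion: proving that the local action of $\fKRAut$ on the set of branches at a fixed vertex is \emph{cyclic} rather than an arbitrary permutation, and that the induced group on each branch of a cycle is the \emph{same} group transported equivariantly, so that the result genuinely assembles into a wreath product $\aGrp'\wr\ZZZ_m$ rather than something weaker. This requires (i) using orientation-preservation together with Axiom~\AxCrPt\ to control permutations of the local sectors at a critical point, (ii) invoking isotopy-to-identity to rule out reflective symmetries that would produce dihedral groups, and (iii) building a ``conjugating'' isotopy between the restrictions of $\dif$ to different branches in a cycle, which is where the exclusion of $\Sphere$ and $\Torus$ again enters, since on those surfaces additional global symmetries could obstruct the local-to-global reduction. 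Once (i)--(iii) are in place, the induction runs cleanly and, together with the realization construction, yields the stated equality of all three classes.
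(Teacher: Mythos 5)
First, note that the paper does not prove Theorem~\ref{th:characterization_G} at all: it is quoted from the earlier work \cite{Maksymenko:KRGraphs:2013}, so there is no in-paper proof to compare against; your two-inclusion strategy (realization of $\classR$ by Morse maps, plus the structural inclusion $\{\fKRAut\}\subseteq\classR$, plus $\Morse(\Mman,\Pman)\subset\FFF(\Mman,\Pman)$) is the right overall shape and mirrors how the analogous Theorem~\ref{th:structure_of_pi1_fOrb} is handled in \S\ref{sect:proof:th:structure_of_pi1_fOrb}.

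There are, however, two genuine problems. The realization step for $\aGrp\wr\ZZZ_m$ as you describe it would fail: on a compact orientable surface with $\chi(\Mman)<0$ a \emph{nontrivial finite-order} diffeomorphism isotopic to the identity does not exist (finite cyclic subgroups of $\Diff(\Mman)$ inject into the mapping class group), so there is no global order-$m$ rotation $\tau$ isotopic to $\id_\Mman$ with disjoint fundamental domains. You have also inverted the role of the hypothesis $\Mman\neq\Sphere,\Torus$: those are precisely the surfaces that \emph{do} carry such rotations, which is why extra groups become realizable there and why they must be excluded from the \emph{structural} inclusion, not the realization one. The correct realization confines the whole $\ZZZ_m$-symmetric configuration to an embedded $2$-disk: one takes $m$ sub-disks carrying copies of a function realizing $\aGrp$, placed on the same critical level, and a diffeomorphism supported in the ambient disk (isotopic to the identity rel boundary, but of infinite order) that permutes them cyclically --- exactly the construction used in Lemma~\ref{lm:realization_of_pi1} and Figure~\ref{fig:pi1_disk_f_g_wr_zn}. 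The second problem is that the heart of the structural inclusion --- showing that the permutation of branches at a vertex induced by an element of $\fStabIsotId$ is cyclic rather than arbitrary, and that the branch groups assemble equivariantly into $\aGrp'\wr\ZZZ_m$ --- is exactly the content of \cite{Maksymenko:KRGraphs:2013}; you correctly identify it as the main obstacle but do not supply an argument, so as written the proposal establishes neither inclusion.
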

In other words, for each $\func \in \FFF(\Mman,\Pman)$ the group $\fKRAut$ can be obtained from the unit group $\{1\}$ by finitely many operations of direct products and wreath products from the top with certain finite cyclic groups.
Conversely, for any group $G\in\classR$ one can find $\func\in\FFF(\Mman,\Pman)$, which can be assumed even Morse, such that $G \cong \fKRAut$.

Since $\fKRAut$ is a finite group, $\homStabToAutG$ reduces an \textit{epimorphism} $\homStabToAutG:\pi_0\fStabIsotId\to\fKRAut$.
Also notice that we have a \textit{surjective} boundary homomorphism $\partial_1: \pi_1\fOrbComp \longrightarrow \pi_0\fStabIsotId$, see Eq.~\eqref{equ:pi1Of_exact_sequence}.
Therefore 
\[
\fKRAut = \homStabToAutG\circ\partial_1(\pi_1\fOrb)
\]
is a factor group of $\pi_1\fOrb$.
Thus Theorem~\ref{th:characterization_G} says that the factor group $\fKRAut$ of $\pi_1\fOrb$ can be described in terms of wreath products $A\wr\ZZZ_m$ being factor groups of $A\wrm{m}\ZZZ$.

\medskip
 
Our main result, Theorem~\ref{th:structure_of_pi1_fOrb} below, shows that $\pi_1\fOrb$ itself can be described in terms of wreath products $A\wrm{m}\ZZZ$.

\begin{definition}
Let $\classP$ be the minimal class of groups satisfying the following conditions:
\begin{enumerate}
\item
the unit group $\{1\}$ belongs to $\classP$;
\item
if $\aGrp,\bGrp\in\classP$, then $\aGrp\times \bGrp\in\classP$;
\item 
if $\aGrp\in\classP$ and $m\geq1$, then $\aGrp\wrm{m}\ZZZ \in\classP$.
\end{enumerate}
\end{definition}

\begin{theorem}\label{th:structure_of_pi1_fOrb}
Let $\Mman$ be a connected compact orientable surface distinct from $2$-sphere and $2$-torus.
Then the class $\classP$ coincides with each of the following two classes of fundamental groups:
\begin{align*}
&\{ \pi_1\fOrb \mid \func\in\Morse(\Mman,\Pman) \}, & 
&\{ \pi_1\fOrb \mid \func\in\FFF(\Mman,\Pman) \}.
\end{align*}
\end{theorem}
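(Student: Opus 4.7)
The plan is to prove both inclusions separately. Since $\Morse(\Mman,\Pman)\subset\FFF(\Mman,\Pman)$, it suffices to show \emph{(a)} that $\pi_1\fOrb\in\classP$ for every $\func\in\FFF(\Mman,\Pman)$, and \emph{(b)} that every $\aGrp\in\classP$ is realized as $\pi_1\fOrb$ for some Morse $\func\in\Morse(\Mman,\Pman)$.

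\textbf{Step (a): upper bound.} I would induct on the number of interior vertices of the Kronrod--Reeb graph $\fKRGraph$, equivalently on the number of saddle critical points of $\func$. For the base case, if $\fKRGraph$ is a single edge then $\fKRAut=\{1\}$ and the exact sequence~\eqref{equ:pi1Of_exact_sequence} combined with Theorem~\ref{th:right_action_props}(3)--(4) gives $\pi_1\fOrb\in\{\{1\},\ZZZ\}\subset\classP$. For the inductive step, pick a ``leaf'' vertex $v$ of $\fKRGraph$ corresponding to a critical component $\crLev$ of some level $\func^{-1}(c)$. By Theorem~\ref{th:right_action_props}(5), replace the situation by one having an $\func$-adopted tubular neighbourhood $\crNbh$ of $\crLev$, and set $\Xman=\partial\crNbh$. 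Since $\dim\Xman=1>0$, Theorem~\ref{th:right_action_props}(4) decomposes $\pi_1\fOrbCompX$ as a direct product of factors $\pi_0\StabilizerIsotId{\func_i,\hat\Bman_i}$, one for each piece $\Bman_i$, with each $\Bman_i$ diffeomorphic to a disk or an annulus (no $\MobiusBand$ since $\Mman$ is orientable). Each such factor is the corresponding group for a strictly simpler $\func_i$, hence lies in $\classP$ by the inductive hypothesis.

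It remains to recover $\pi_1\fOrb$ from $\pi_1\fOrbCompX$. The quotient $\fStabIsotId/\fStabIsotIdX$ is controlled by the action of $\fKRAut$ at the vertex $v$: by Theorem~\ref{th:characterization_G} this is a finite cyclic group $\ZZZ_m$ permuting the $m$ equivalent branches leaving $v$, while a Dehn twist along $\partial\crNbh$ yields an infinite-cyclic element in the kernel of $\homStabToAutG$. Combining these two contributions, the local piece of $\pi_1\fOrb$ associated to $v$ takes the form $\aGrp\wrm{m}\ZZZ$, where $\aGrp\in\classP$ is the inductively-known contribution of a single branch. Taking the direct product over all vertices of $\fKRGraph$ then yields $\pi_1\fOrb\in\classP$.

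\textbf{Step (b): realization.} I would argue by structural induction on the definition of $\classP$. The unit group $\{1\}$ is realized by any Morse $\func$ whose KR-graph has trivial automorphism group and no Dehn-twist contribution, e.g.\ a height function on $\Mman$ with a single minimum, a single maximum, and generic saddles. For $\aGrp\times\bGrp$, place Morse realizations of $\aGrp$ and $\bGrp$ on disjoint $\func$-adopted subsurfaces of $\Mman$ and combine them into a global Morse function; Theorem~\ref{th:right_action_props}(4) then reads off the direct product. For $\aGrp\wrm{m}\ZZZ$, embed $m$ disjoint cyclically arranged copies of a realizing model for $\aGrp$ into an annular region of $\Mman$, glue them to a central critical component with a $\ZZZ_m$-symmetric local form (a homogeneous polynomial on $\RRR^2$ whose rotational symmetry group is $\ZZZ_m$) and perturb to a Morse function preserving the symmetry; the $\ZZZ_m$-rotation provides the cyclic-shift automorphism and a Dehn twist along the separating circle realizes the $\ZZZ$-generator.

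\textbf{Main obstacle.} The delicate point lies in Step (a): showing that the local extension at a vertex $v$ is precisely the semidirect product $\Maps{\ZZZ_m}{\aGrp}\rtimes\ZZZ$ rather than some more complicated extension. Concretely, I must construct a lift of the $\ZZZ_m$-rotation to $\fStabIsotId$ whose $m$-th power represents in $\pi_1\fOrb$ \emph{exactly one} Dehn twist along $\partial\crNbh$, so that the induced element of $\pi_1\fOrb$ has infinite order and generates the $\ZZZ$-factor; and verify that conjugation by it acts on $\Maps{\ZZZ_m}{\aGrp}$ by the cyclic shift $i\mapsto i+1$. Both facts hinge on explicit smooth isotopies supported near the critical components, and making them precise for arbitrary singularities allowed by Axiom~\AxCrPt\ (homogeneous polynomials without multiple factors, not merely Morse saddles) requires careful local analysis of gradient-like flows, which is the main technical heart of the argument.
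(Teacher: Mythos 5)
Your overall strategy coincides with the paper's: reduce $\pi_1\fOrb$ to $\pi_0\StabilizerIsotId{\func,\partial\Mman}$ via parts 1), 4) of Theorem~\ref{th:right_action_props}, decompose along critical level sets so that the local contribution of a critical component is a wreath product $\aGrp\wrm{m}\ZZZ$, and realize arbitrary groups of $\classP$ by symmetric Morse models on disks and cylinders (your Step (b) is essentially the paper's Lemma~\ref{lm:realization_of_pi1}). However, the point you defer as the ``main obstacle'' is not a residual technicality --- it is the actual content of the proof, occupying Sections~\ref{sect:critical_level_sets} and~\ref{sect:funcs_2D_Cyl}, and your proposal does not supply it. Two concrete gaps. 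First, you assert that the branches at a vertex $v$ are permuted cyclically with all orbits of equal length $m$, citing Theorem~\ref{th:characterization_G}; that theorem is a global classification of the groups $\fKRAut$ and yields no such local statement. The paper derives it (Lemma~\ref{lm:nSurf_g_orbit}) from an Euler characteristic argument: an orientation-preserving homeomorphism of $S^2$ preserving the cell structure induced by $\crLev$ either leaves every cell positively invariant or has exactly $\chi(S^2)=2$ positively invariant cells, which forces the non-invariant components into orbits of a common length $m$. Second, identifying the resulting extension of $\ZZZ$ by $\Maps{\ZZZ_m}{\aGrp}$ with the specific semidirect product $\aGrp\wrm{m}\ZZZ$ (rather than some twisted or non-split extension) is done in the paper not by analyzing gradient-like flows but by constructing an explicit epimorphism $\eta:\Stabilizer{\func,\hbComp}\to\ZZZ$ (Lemma~\ref{lm:epi_SIsotId_f_hB__Z}): one lifts a diffeomorphism fixed on the boundary circle $\hbComp$ to the universal cover of the cylinder between $\hbComp$ and $\crLev$, punctured at the critical points, and records the integer shift of the boundary intervals. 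Then $\uTriv=\eta^{-1}(m\ZZZ)$, the Dehn twist has $\eta$-value $\pm m$, and the key property that $\eta(\gdif)=\eta(\dif)$ if and only if $\gdif$ is isotopic in the stabilizer to a diffeomorphism agreeing with $\dif$ on the extended neighbourhood is exactly what makes the map $\isoStabfB$ of Lemma~\ref{lm:pi0StabfB_sructure} well defined, injective and surjective. Without a substitute for $\eta$ your argument establishes only that some extension exists, not which one.

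Two smaller discrepancies: the paper first uses part 4) of Theorem~\ref{th:right_action_props} to reduce the whole surface to disks and cylinders and only then runs an induction on the number of critical points, peeling off the critical component \emph{closest to a fixed boundary circle} $\hbComp$ (this anchoring at $\hbComp$ is what permits the lift to the universal cover defining $\eta$); your choice of an arbitrary ``leaf'' vertex of $\fKRGraph$ loses that anchor. And the final expression for $\pi_1\fOrb$ is a nested recursive combination of direct products and wreath products, not ``a direct product over all vertices of $\fKRGraph$'' as you write.
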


It means that for each $\func \in \FFF(\Mman,\Pman)$ the group $\pi_1\fOrb$ can be obtained from $\{1\}$ by finitely many operations of direct products and wreath products from the top with $\ZZZ$ over certain finite cyclic groups.
Conversely, for any group $G\in\classP$ one can find $\func\in\FFF(\Mman,\Pman)$, which can be assumed even Morse, such that $G \cong \pi_1\fOrb$.

The proof will be given in~\S\ref{sect:proof:th:structure_of_pi1_fOrb}.

\begin{remark}\rm
If $\func$ is generic, that is every critical level set of $\func$ contains exactly one critical point, then by~\cite{Maksymenko:AGAG:2006}, $\fKRAut=\{1\}$ and $\pi_1\fOrb \cong \ZZZ^k$ for some $k\geq0$.
In particular, $\pi_1\fOrb\in\classP$.
\end{remark}

\begin{remark}\rm
It is proved in~\cite{Maksymenko:KRGraphs:2013} that each group $A\in\classR$ is solvable.
By similar arguments the same statement can be established for the class $\classP$.
Therefore Theorem~\ref{th:structure_of_pi1_fOrb} gives another proof of solvability result in 3) of Theorem~\ref{th:right_action_props}.
We leave the details for the reader.
\end{remark}

\begin{remark}\rm
Theorem~\ref{th:structure_of_pi1_fOrb} holds for certain classes of smooth functions on $\Torus$, see~\cite{MaksymenkoFeshchenko:UMZ:ENG:2014}. 
\end{remark}

\section{Critical level sets}\label{sect:critical_level_sets}
Let $\func\in\FFF(\Mman,\Pman)$, $c\in\Pman$ and $\crLev$ be a connected component of the level set $\func^{-1}(c)$.
We will call $\crLev$ \emph{critical} if it contains critical points of $\func$, and \emph{regular} otherwise.

Assume that $\crLev$ is critical.
Then due to Axiom~\AxCrPt\ $\crLev$ has a structure of a $1$-dimensional CW-complex whose $0$-cells are critical points of $\func$ belonging to $\crLev$.

Choose $\eps>0$ and let $\crNbh$ be a connected component of $\func^{-1}[c-\eps,c+\eps]$ containing $\crLev$.
We will call $\crNbh$ an \emph{$\func$-regular neighbourhood} of $\crLev$ if the following two conditions hold, see Figure~\ref{fig:crlev_nbh}:
\begin{align*}
&\crNbh \cap\partial\Mman = \varnothing, & 
&\crNbh \cap\fSing = \crLev \cap\fSing.
\end{align*}

\begin{figure}[ht]
\centerline{\includegraphics[width=7cm]{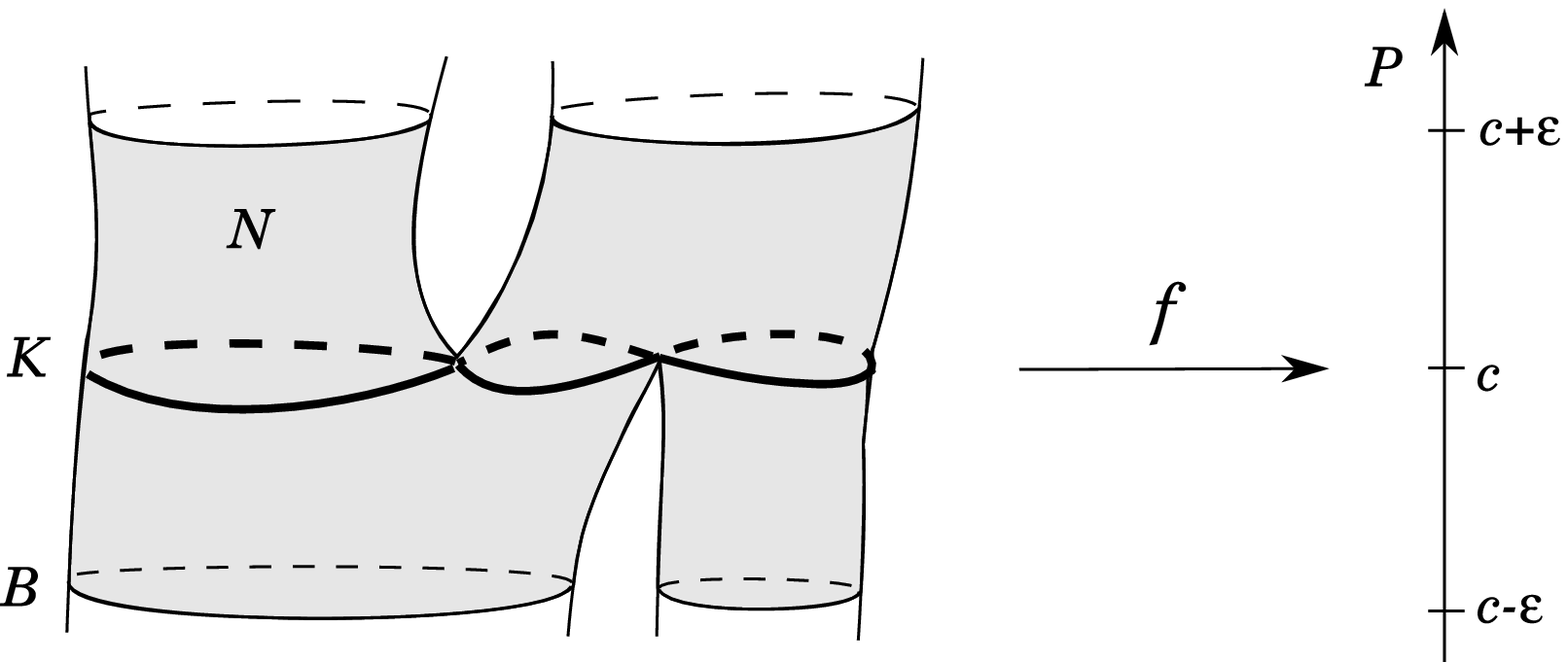}}
\caption{Critical component of level-set of $\func$}
\protect\label{fig:crlev_nbh}
\end{figure}

Let also
\[
\StabilizerInv{\func,\crLev} = \{ \dif\in\Stabilizer{\func} \mid \dif(\crLev) = \crLev\}
\]
be the subgroup of $\Stabilizer{\func}$ consisting of diffeomorphism leaving $\crLev$ invariant.
We will now define two equivalence relations $\usim$ and $\dsim$ on $\StabilizerInv{\func,\crLev}$.

Given a homeomorphism $\dif:\Mman\to\Mman$ and a {\em connected orientable submanifold} $\Xman\subset \Mman$ we will say that $\Xman$ is \textit{positively invariant} for $\dif$ whenever the following conditions hold true:
\begin{itemize}
\item $\dif(\Xman) = \Xman$, and
\item if $\Xman$ is not a point, then the restriction map $\dif|_{\Xman}:\Xman \to \Xman$ is a preserving orientation homeomorphism.
\end{itemize}

Let $\dif\in\StabilizerInv{\func,\crLev}$.
Since $\dif$ preserves the set of critical points of $\func$, it follows that the restriction $\dif|_{\crLev}: \crLev \to \crLev$ is a cellular homeomorphism of $\crLev$.
We will say that $\dif$ is \emph{$\crLev$-trivial}, if each cell of $\crLev$ is positively invariant for $\dif$.
Denote by $\uTriv$ the subgroup of $\StabilizerInv{\func,\crLev}$ consisting of $\crLev$-trivial diffeomorphisms.
Evidently, $\uTriv$ is normal in $\StabilizerInv{\func,\crLev}$.
Given $\gdif\in\StabilizerInv{\func,\crLev}$ we will write $\gdif \usim \dif$ whenever $\gdif^{-1}\circ\dif \in \uTriv$.

\medskip

Furthermore, let $\crNbh$ be an $\func$-regular neighbourhood of $\crLev$.
Then $\dif(\crNbh)=\crNbh$ and so $\dif$ yields a certain permutation of connected components of $\partial\crNbh$.
Say that $\dif$ is \emph{$\partial\crNbh$-trivial}, if each connected component of $\partial\crNbh$ if positively invariant for $\dif$.
Denote by $\dTriv$ the normal subgroup of $\StabilizerInv{\func,\crLev}$ consisting of $\partial\crNbh$-trivial diffeomorphisms.
Again for $\gdif\in\StabilizerInv{\func,\crLev}$ we write $\gdif \dsim \dif$ whenever $\gdif^{-1}\circ\dif\in\dTriv$.

\begin{lemma}\label{lm:uTriv_dTriv_prop}{\rm see~\cite[Theorems~6.2, 7.1]{Maksymenko:AGAG:2006}.}
Let $\gdif,\dif\in\StabilizerInv{\func,\crLev}$ and $\crNbh$ be an $\func$-regular neighbourhood of $\crLev$.
Then the following statements hold.
\begin{enumerate}
\item
Suppose there exists at least one edge $\delta$ being positively invariant for $\gdif^{-1}\circ\dif$.
Then all cells of $\crLev$ are also positively invariant for $\gdif^{-1}\circ\dif$, and in particular, $\gdif \usim \dif$.

\item
Let $\Wman$ be an open neighbourhood of $\crNbh$.
If $\gdif \usim \dif$, then there exists an isotopy of $\gdif$ in $\StabilizerInv{\func,\crLev}$ supported in $\Wman$ to some $\gdif'$ such that $\gdif'=\dif$ on $\crNbh$.

In particular, every $\crLev$-trivial diffeomorphism is isotopic in $\StabilizerInv{\func,\crLev}$ via an isotopy supported in $\Wman$ to a diffeomorphism fixed on $\Nman$.

\item
$\uTriv \subset \dTriv$, and so the relation $\gdif \usim \dif$ always implies $\gdif \dsim \dif$.

\item
Suppose that either 
\begin{itemize}
\item
$\gdif$ and $\dif$ are isotopic as diffeomorphisms of $\Mman$ or
\item
$\crNbh$ can be embedded into $\RRR^2$.
\end{itemize}
Then $\uTriv=\dTriv$ and so the relations $\gdif \dsim \dif$ and $\gdif \usim \dif$ are equivalent.
\end{enumerate}
\end{lemma}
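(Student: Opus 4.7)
The plan is to exploit the local structure of $\func$ near its critical points given by Axiom~\AxCrPt: around each critical point $z\in\crLev$ the germ of $\func$ is smoothly conjugate to a homogeneous polynomial $\func_z:\RRR^2\to\RRR$ of degree $d_z\geq 2$ without multiple factors. Consequently, $\crLev$ meets a small disk neighbourhood of $z$ in $2d_z$ \emph{half-edges} cyclically arranged around $z$, and the $2d_z$ complementary sectors correspond bijectively to the local arcs of $\partial\crNbh$ at $z$. The linear group preserving $\func_z$ is finite (it permutes the $d_z$ linear factors of $\func_z$) and, after a linear change of coordinates, lies inside $\mathrm{O}(2)$; hence the actions of any $\phi\in\StabilizerInv{\func,\crLev}$ with $\phi(z)=z$ on the $2d_z$ half-edges and on the $2d_z$ local boundary arcs are induced by the same element of $\mathrm{O}(2)$ --- a rotation if $\phi$ preserves local orientation at $z$, a reflection otherwise.

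For~(1) and~(2), put $\phi:=\gdif^{-1}\circ\dif$. If some edge $\delta$ is positively invariant for $\phi$, then at each endpoint $v$ of $\delta$ the local rotation or reflection fixes one oriented half-edge and must therefore be the identity on all half-edges at $v$; propagating along a spanning sequence of edges of the connected $1$-complex $\crLev$ yields positive invariance of every cell, proving~(1). For~(2), with $\phi\in\uTriv$ it suffices to build $\tau\in\StabilizerInv{\func,\crLev}$ supported in $\Wman$ with $\tau|_{\crNbh}=\phi|_{\crNbh}$ and joined to $\id$ by a path $\tau_t$ in $\StabilizerInv{\func,\crLev}$ supported in $\Wman$; then $H_t:=\gdif\cdot\tau_t$ is an isotopy of $\gdif$ in $\StabilizerInv{\func,\crLev}$ supported in $\Wman$ with $H_1|_{\crNbh}=\dif|_{\crNbh}$. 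One builds $\tau$ locally: near each critical point, using the homogeneous normal form one flows along the sectors of the foliation by level sets of $\func_z$ from $\id$ to $\phi$; along each open edge of $\crLev$, where $\func$ is a submersion, one straightens $\phi$ inside a thin tubular strip; and a partition-of-unity glueing supported in $\Wman$ assembles these local data into the required $\tau$ and $\tau_t$.

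Part~(3) follows immediately from~(2) applied to $\phi\in\uTriv$: the permutation and orientations on the components of $\partial\crNbh$ are discrete invariants of any path in $\StabilizerInv{\func,\crLev}$, so they must already be trivial for $\phi$ since they become trivial after the isotopy from~(2) to a diffeomorphism that equals $\id$ on $\crNbh$. The main obstacle is~(4): the reverse inclusion $\dTriv\subset\uTriv$. The $\mathrm{O}(2)$-dictionary at each vertex $z$ identifies the local action on half-edges with that on local boundary arcs, but $\partial\crNbh$-triviality is global data on components of $\partial\crNbh$, and transporting it to \emph{local} triviality at $z$ can fail when several local arcs at $z$ merge into the same global boundary component, or when the local action is a reflection; both obstructions are excluded by the alternative hypotheses. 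In case~(a), $\gdif$ and $\dif$ being isotopic in $\DiffM$ makes $\phi=\gdif^{-1}\circ\dif$ isotopic to $\id_\Mman$, hence orientation-preserving in a neighbourhood of $\crLev$, and an induced-isotopy argument on $\partial\crNbh$ rules out the non-trivial local rotations. In case~(b), planarity of $\crNbh\hookrightarrow\RRR^2$ supplies sufficient combinatorial rigidity (through the interplay of vertices, edges and faces of $\crLev\subset\crNbh$) to reach the same conclusion. Once the local actions are trivial at every vertex, the propagation argument of~(1) gives $\phi\in\uTriv$.
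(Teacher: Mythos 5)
Your write-up is a reasonable roadmap, but note first that the paper does not reprove this lemma at all: it simply cites \cite[Claim~7.1.1, Theorem~6.2, Lemma~6.4, Theorem~7.1]{Maksymenko:AGAG:2006} and \cite[Theorem~5]{Maksymenko:ProcIM:ENG:2010}. Measured as a self-contained argument, your proposal has genuine gaps. In (1), the pivotal step fails as stated: you assert that if the local action of $\phi=\gdif^{-1}\circ\dif$ at a vertex $v$ fixes one oriented half-edge then it is the identity on all half-edges at $v$. That holds for a cyclic-order-preserving permutation, but a ``reflection'' in the line containing the fixed half-edge also fixes that half-edge, preserves its orientation, and still interchanges the remaining half-edges in pairs. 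To exclude reflections one must use that $\func_z$ has no multiple factors, so $\func-\func(z)$ changes sign across each local branch of $\crLev$ at $z$; a reflection fixing a half-edge would swap the two adjacent sectors, which carry opposite signs, contradicting $\func\circ\phi=\func$. That sign argument is the real content of (1) and is missing. (Your side claim that the linear stabilizer of $\func_z$ is finite and conjugate into $\mathrm{O}(2)$ is also false: for the saddle $\func_z=xy$ it contains the one-parameter group $(x,y)\mapsto(tx,t^{-1}y)$; only the induced action on half-edges is so constrained.)

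In (2), the proposed partition-of-unity glueing of local data cannot yield elements of $\Stabilizer{\func}$: cutting off or interpolating between $\func$-preserving diffeomorphisms produces maps that are in general neither diffeomorphisms nor $\func$-preserving. The cited proofs avoid this by writing elements of $\fStabId$ as shifts $x\mapsto\aFlow(x,\alpha(x))$ along a flow whose orbits lie in level sets of $\func$ and isotoping by scaling the shift function $\alpha$ (exactly the mechanism used in the paper's proof of Lemma~\ref{lm:isot_rel_hcrNbh}); without some such device the existence of $\gdif'$ is not established, and since your (3) is deduced from (2) it inherits this gap. Finally, for (4) you only restate the goal --- ``an induced-isotopy argument rules out non-trivial rotations'', ``planarity supplies sufficient combinatorial rigidity'' --- without giving either argument; this inclusion $\dTriv\subset\uTriv$ under the two alternative hypotheses is precisely the nontrivial content of \cite[Theorem~7.1]{Maksymenko:AGAG:2006} and cannot be waved through.
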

\begin{proof}
Statement (1) is a consequence of~\cite[Claim~7.1.1]{Maksymenko:AGAG:2006}, (2) follows from~\cite[Theorem~6.2 \& Lemma~6.4]{Maksymenko:AGAG:2006} for Morse maps and from~\cite[Theorem~5]{Maksymenko:ProcIM:ENG:2010} for all $\func\in\FFF(\Mman,\Pman)$, (3) follows from (2), and (4) from~\cite[Theorem~7.1]{Maksymenko:AGAG:2006}.
\end{proof}

Now let $\bComp$ be a connected component of $\partial\crNbh$, and $\hbComp$ be a regular component of some level-set of $\func$ such that $\bComp$ and $\hbComp$ bound a cylinder $\bCyl$ containing no critical points of $\func$, see Figure~\ref{fig:crlev_ext_nbh}.
Denote also $\hcrNbh = \crNbh \cup \bCyl$.
\begin{figure}[ht]
\centerline{\includegraphics[width=7cm]{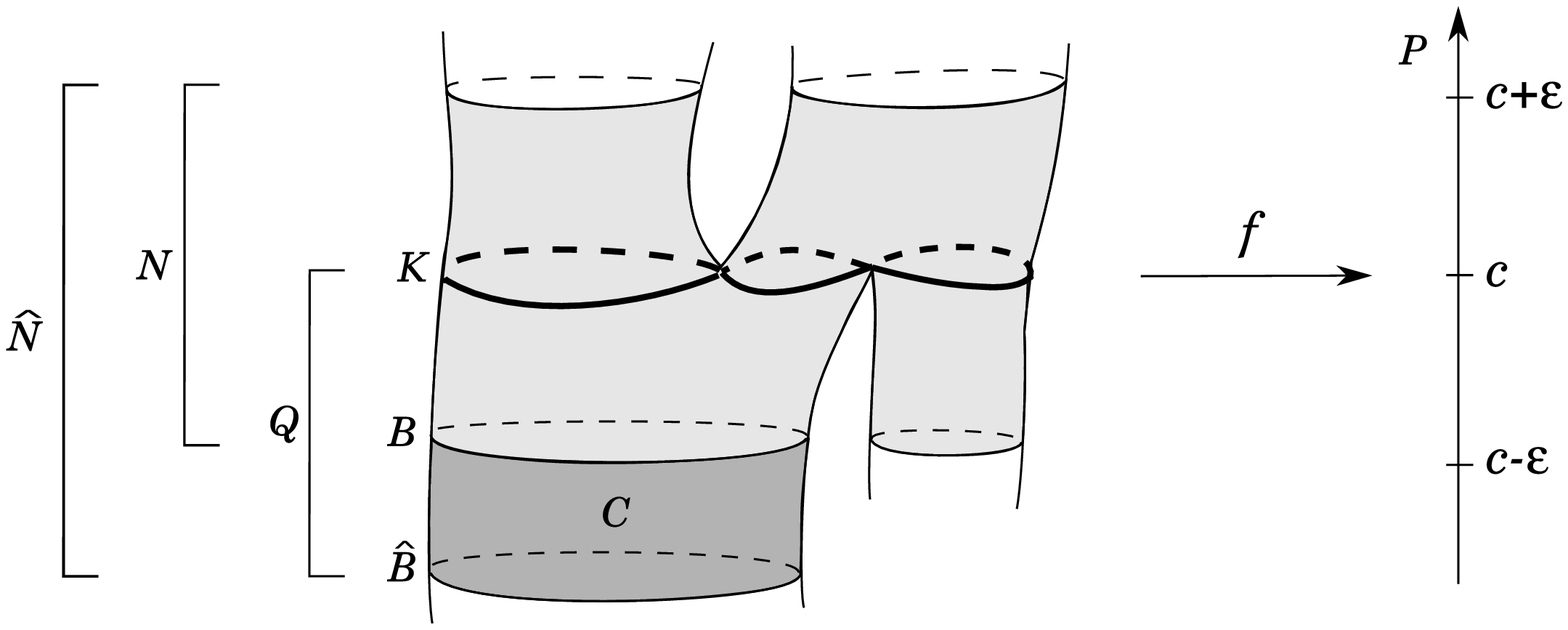}}
\caption{Extended neighbourhood of $\crLev$}
\protect\label{fig:crlev_ext_nbh}
\end{figure}

\begin{lemma}\label{lm:isot_rel_hcrNbh}{\rm see~\cite[Lemma~4.14]{Maksymenko:AGAG:2006}}.
Suppose $\crLev$ is not a local extreme of $\func$.
If $\gdif, \dif\in \Stabilizer{\func,\hbComp}$ are such that
\begin{itemize}
\item
$\gdif = \dif$ on some neighbourhood of $\hcrNbh$ and
\item
$\gdif$ and $\dif$ are isotopic in $\Stabilizer{\func}$,
\end{itemize}
then they are isotopic in $\Stabilizer{\func}$ relatively to some neighbourhood of $\hcrNbh$.
\end{lemma}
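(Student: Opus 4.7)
The plan is first to reduce to the case when one of the two diffeomorphisms is the identity, then to modify the given isotopy so as to make it stationary on a neighbourhood of $\hcrNbh$. Set $\psi = \dif^{-1}\circ\gdif \in \fStab$. By hypothesis $\psi$ equals the identity on some neighbourhood $\Uman$ of $\hcrNbh$, and the isotopy $K_t = \dif^{-1}\circ H_t$ (where $H_t$ is the given isotopy from $\gdif$ to $\dif$) connects $\psi$ to $\id_{\Mman}$ inside $\fStab$. The problem thus becomes: construct an isotopy from $\psi$ to $\id_{\Mman}$ in $\fStab$ that fixes pointwise some (possibly smaller) neighbourhood of $\hcrNbh$.

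I first control the behaviour on $\hbComp$. Since $\psi$ and $\id$ both fix $\hbComp$ pointwise and $\hbComp$ is a connected component of the regular level set $\func^{-1}(\func(\hbComp))$, discreteness of the components of level sets together with continuity of $t\mapsto K_t(\hbComp)$ yields $K_t(\hbComp)=\hbComp$ for every $t$, so that $K_t|_{\hbComp}$ is a loop in $\Diff(\hbComp)\cong\Diff(\Circle)$ based at the identity. Next, I exploit the hypothesis that $\crLev$ is not a local extreme of $\func$: this guarantees that $\crNbh$ has boundary components on both sides of $\crLev$, and in particular $\hbComp$ admits a two-sided collar of regular level circles in $\Mman$ on which $\func$ has no critical points. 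Using a $\func$-preserving ``rotation'' of such a collar (obtained by flowing along a Hamiltonian vector field tangent to the level circles, cut off by a bump function in the $\func$-direction), I can manufacture an explicit $t$-dependent correction $L_t\in\fStab$ with $L_0=L_1=\id$ such that the modified isotopy $\widetilde{K}_t = L_t\circ K_t$ fixes $\hbComp$ pointwise for every $t$.

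Once $\widetilde{K}_t$ fixes $\hbComp$ pointwise, its restriction to $\hcrNbh$ is a loop in $\Stabilizer{\func|_{\hcrNbh},\hbComp}$ based at the identity. Since $\crLev$ is not a local extreme, the restriction $\func|_{\hcrNbh}$ carries a critical point that is not a non-degenerate local extreme; then Theorem~\ref{th:right_action_props}(3) applied to $\func|_{\hcrNbh}$ shows that $\StabilizerId{\func|_{\hcrNbh}}$ is contractible, so the loop in question is null-homotopic rel endpoints. Splicing this null-homotopy on $\hcrNbh$ with the unchanged $\widetilde{K}_t$ on $\Mman\setminus\hcrNbh$, where the two deformations agree on $\hbComp$ and are smoothly matched on a collar by a further application of Lemma~\ref{lm:uTriv_dTriv_prop}(2), yields the desired isotopy from $\psi$ to $\id$ in $\fStab$ that is stationary on a neighbourhood of $\hcrNbh$. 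The main technical obstacle is the construction of the correction $L_t$ that kills the winding of $K_t|_{\hbComp}$ inside $\Diff(\Circle)$ while remaining globally in $\fStab$; this is precisely where the non-extremality of $\crLev$ is essential, because otherwise the winding can represent a non-trivial class in $\pi_1\Diff(\Circle)\cong\ZZZ$ that cannot be killed by any $\func$-preserving perturbation.
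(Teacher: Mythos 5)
Your reduction to $\psi=\dif^{-1}\circ\gdif$ matches the paper's first step, but from there the two arguments diverge, and your route has two genuine gaps. First, a correction $L_t$ with $L_0=L_1=\id_{\Mman}$ and $L_t|_{\hbComp}=(K_t|_{\hbComp})^{-1}$ exists only if the loop $t\mapsto K_t|_{\hbComp}$ is null-homotopic in $\Diff^{+}(\Circle)$: a collar twist whose boundary trace is a loop of winding number $w$ necessarily ends at a map isotopic rel boundary to the $(-w)$-th power of a Dehn twist, not at the identity. You correctly identify this winding as the crux and assert that non-extremality of $\crLev$ kills it, but you give no argument; since $K_t$ is a path from $\psi$ to $\id_{\Mman}$ rather than a loop at $\id_{\Mman}$, the vanishing of the winding number does not follow from $\pi_1$ of any of the spaces in play, and in fact the cleanest proof that it vanishes is precisely the shift-function argument the paper uses. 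Second, the final splicing is not a legitimate operation: after null-homotoping $\widetilde K_t|_{\hcrNbh}$ you cannot replace $\widetilde K_t$ by the identity on $\hcrNbh$ while keeping it unchanged on $\Mman\setminus\hcrNbh$, because $\widetilde K_t$ need not fix $\partial\hcrNbh\setminus\hbComp$ pointwise, so the glued map is not even continuous. What is really needed here is a parametric, $\func$-preserving isotopy-extension theorem extending the two-parameter family $F_{s,t}$ from $\hcrNbh$ to all of $\Mman$; Lemma~\ref{lm:uTriv_dTriv_prop}(2) concerns a single diffeomorphism, not a family, and does not supply this.

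The paper avoids both problems by an explicit construction rather than a deformation of the given isotopy: after reducing to $\dif=\id_{\Mman}$, it invokes a flow $\aFlow$ for which every $\gdif\in\fStabId$ has the form $\gdif(x)=\aFlow(x,\alpha_{\gdif}(x))$ for a smooth shift function $\alpha_{\gdif}$, unique on any connected $\func$-adopted subsurface containing a critical point that is not a non-degenerate local extreme. Since $\crLev$ is not a local extreme, $\hcrNbh$ contains such a point, so $\gdif=\id$ on $\hcrNbh$ forces $\alpha_{\gdif}=0$ there, and the formula $\gdif_t(x)=\aFlow(x,t\alpha_{\gdif}(x))$ gives an $\func$-preserving isotopy that is automatically stationary on $\hcrNbh$. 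If you want to complete your approach, both missing ingredients above essentially reduce to this machinery, so the detour does not buy anything.
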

\begin{proof}
It suffices to prove this lemma for the case when $\dif = \id_{\Mman}$, and so we are in the situation when $\gdif$ belongs to $\fStabId$ and is also fixed on $\crNbh$.

Suppose $\Mman$ is orientable.
In this case one can define a smooth flow $\aFlow:\Mman\times\RRR\to\Mman$ such that $\gdif\in\fStabId$ if and only if there exists a $C^{\infty}$-function $\alpha_{\gdif}:\Mman\to\RRR$ satisfying the identity: $\gdif(x) = \aFlow(x,\alpha_{\gdif}(x))$ for all $x\in\Mman$, see~\cite[Theorem~3]{Maksymenko:ProcIM:ENG:2010}.
Moreover, this function is unique on any connected $\func$-adopted subsurface containing at least one critical point being \textit{not a non-degenerate local extreme of $\func$}.
By assumption $\hcrNbh$ contains such points and $\gdif$ is fixed on $\hcrNbh$.
Hence $\alpha_{\gdif}=0$ on $\hcrNbh$.
Then the isotopy between $\gdif$ and $\id_{\Mman}$ in $\fStabId$ can be given by the formula
$\gdif_t(x) = \aFlow(x,t\alpha_{\gdif}(x))$, $t\in[0,1]$, see~\cite[Lemma~4.14]{Maksymenko:AGAG:2006}.

If $\Mman$ is non-orientable, the proof follows by the arguments similar to the proof of~\cite[Theorem~3]{Maksymenko:ProcIM:ENG:2010} for non-orientable case.
\end{proof}
\begin{lemma}\label{lm:epi_SIsotId_f_hB__Z}
Suppose $\crLev$ is not a local extreme of $\func$.
Then there exists an epimorphism $\eta:\Stabilizer{\func,\hbComp} \longrightarrow \ZZZ$ having the following properties.
\begin{enumerate}
\item
$\uTriv = \eta^{-1}(m\ZZZ)$ for some $m\geq1$.
In particular, $\Stabilizer{\func,\hbComp} / \uTriv \cong \ZZZ_m$.

\item
Let $\Wman$ be any open neighbourhood of $\hcrNbh$ and $\gdif,\dif\in\Stabilizer{\func,\hbComp}$.
Then $\eta(\gdif)=\eta(\dif)$ if and only if there exists an isotopy of $\gdif$ in $\Stabilizer{\func,\hbComp}$ supported in $\Wman$ to some $\gdif'$ such that $\gdif'=\dif$ on $\hcrNbh$.
\end{enumerate}
\end{lemma}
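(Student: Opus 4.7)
Plan: The epimorphism $\eta$ will be constructed from an infinite cyclic cover of the annular region adjacent to $\bComp$. Let $R$ be the connected component of $\crNbh\setminus\crLev$ adjacent to $\bComp$, and denote $\hat R:=R\cup\bCyl$. In the generic situation where $R$ is an open annulus, its ``combinatorial'' end in $\crLev$ is a cyclic word $\gamma=\delta_{1}\cdots\delta_{m}$ of $m$ oriented edges; if instead $R$ is a disk, one first enlarges $R$ across $\crLev$ using the hypothesis that $\crLev$ is not a local extremum (so a second region is available on the opposite side) until an annulus is obtained. In either case we get an annular region $\hat R$ bounded on one side by the smooth circle $\hbComp$ and on the other by a combinatorial cycle $\gamma$ of length $m$. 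A short check shows that every $\dif\in\Stabilizer{\func,\hbComp}$ preserves $\hat R$ setwise (it is the identity on $\hbComp$, preserves $\func$, and hence preserves the adjacent component of each level-preimage) and permutes the edges of $\gamma$ cyclically.

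The map $\eta$ is then the integer lift of this cyclic shift. Let $p:\widetilde{\hat R}\to\hat R$ be the infinite cyclic cover, with lifted edges $\widetilde\gamma=\{\widetilde\delta_{i}\mid i\in\ZZZ\}$ indexed so that one deck translation shifts indices by $m$. Because $\dif$ fixes $\hbComp$ pointwise it admits a unique lift $\widetilde\dif:\widetilde{\hat R}\to\widetilde{\hat R}$ restricting to the identity over $\hbComp$, and $\widetilde\dif(\widetilde\delta_{i})=\widetilde\delta_{i+n}$ for a well-defined integer $n=:\eta(\dif)$. Uniqueness of the lift makes $\eta$ a homomorphism. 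Surjectivity is proved in two stages: the $\func$-preserving Dehn twist $T$ supported in $\bCyl$ (rotating the level circle $\func^{-1}(c+s\eps)\cap\bCyl$ by angle $2\pi s$) satisfies $\eta(T)=m$, and a supplementary diffeomorphism $\dif_{0}$ with $\eta(\dif_{0})=1$ is produced by exploiting the cyclic symmetry of the homogeneous-polynomial germs $\func_{z}$ at the critical vertices of $\crLev$ and gluing the resulting local rotations along $\gamma$ using a cutoff supported in $\Wman$.

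For statement~(1), the induced cellular action of $\dif$ on $\gamma$ is the cyclic shift by $\eta(\dif)\bmod m$, so each $\delta_{i}$ is setwise fixed iff $\eta(\dif)\in m\ZZZ$, and Lemma~\ref{lm:uTriv_dTriv_prop}(1) then upgrades positive invariance of one edge of $\crLev$ to positive invariance of every cell of $\crLev$, giving $\uTriv=\eta^{-1}(m\ZZZ)$. For statement~(2), the ``only if'' direction is immediate, since a continuous path in $\Stabilizer{\func,\hbComp}$ cannot change the discrete invariant $n$. For the ``if'' direction I reduce to $\dif=\id_{\Mman}$ and $\gdif\in\uTriv$; Lemma~\ref{lm:uTriv_dTriv_prop}(2) then furnishes an isotopy of $\gdif$ in $\Stabilizer{\func,\hbComp}$, supported in $\Wman$, to some $\gdif_{1}$ fixed on $\crNbh$. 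The restriction $\gdif_{1}|_{\bCyl}$ is a $\func$-preserving cylinder diffeomorphism fixing $\partial\bCyl$ pointwise and with $\eta(\gdif_{1})=0$, so a further isotopy provided by Lemma~\ref{lm:isot_rel_hcrNbh} and the flow description of $\fStabId$ underlying it extends equality with the identity across $\bCyl$, yielding the required isotopy on all of $\hcrNbh$. The main obstacles I anticipate are the explicit construction of $\dif_{0}$ realising surjectivity of $\eta$, which requires carefully matching the local $\ZZZ/k$-symmetries of the germs $\func_{z}$ to a globally $\func$-preserving map, and the final gluing across $\bCyl$, where one must keep the whole isotopy inside $\Stabilizer{\func,\hbComp}$ with support in $\Wman$---both of these work because the standing hypothesis that $\crLev$ is not a local extremum makes the flow function $\alpha$ uniquely defined on $\hcrNbh$.
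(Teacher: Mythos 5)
Your construction is the paper's in all but one respect: you pass to the infinite cyclic (here universal) cover of the half-open annulus $\overline{\Vman}\setminus\fSing\cong S^1\times[0,1]\setminus F$ adjacent to $\hbComp$, read off the integer shift of the lifted boundary intervals, use the Dehn twist in $\bCyl$ to see the shift homomorphism is non-trivial, and then deduce (1) and (2) from Lemma~\ref{lm:uTriv_dTriv_prop}; the reduction of the ``if'' direction of (2) to an $\func$-preserving untwisting inside $\bCyl$ is also the paper's argument. (Your worry that the region adjacent to $\hbComp$ might be a disk is vacuous --- it retracts onto the circle $\hbComp$ --- but harmless.)

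The genuine gap is surjectivity. The raw shift homomorphism you define --- call it $\eta'$, so that $\eta'(T)=\pm b$ for the Dehn twist $T$, where $b$ is the number of edges in the boundary cycle $\gamma$ --- is in general \emph{not} onto $\ZZZ$, and your remedy of producing $\dif_0$ with $\eta'(\dif_0)=1$ by gluing local rotations coming from the cyclic symmetries of the germs $\func_z$ does not work. Such a $\dif_0$ would have to permute cyclically, by one step, the components of $\Mman\setminus\crNbh$ attached along the edges of $\gamma$, intertwining the restrictions of $\func$ to them; whenever these restrictions are pairwise non-equivalent (e.g.\ $\crLev$ a figure eight in a disk whose two inner regions carry different configurations of critical points) no non-trivial rotation of $\gamma$ is realizable, the image of $\eta'$ is exactly $b\ZZZ$, and the $m$ of statement (1) equals $1$, not $b$. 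In general the image of $\eta'$ is $n\ZZZ$ for some divisor $n$ of $b$ (non-zero thanks to the Dehn twist), and the correct move --- the one the paper makes --- is to \emph{define} $\eta:=\eta'/n$, which is an epimorphism by fiat and satisfies $\uTriv=\eta^{-1}(m\ZZZ)$ with $m=b/n$. This normalization is not cosmetic: Lemma~\ref{lm:g_with_eta_g_1} later chooses $\gdif$ with $\eta(\gdif)=1$, which exists only for the normalized $\eta$. With this correction the rest of your argument goes through.
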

\begin{proof}
Let $\Vman$ be the connected component of $\hcrNbh\setminus\crLev$ containing $\hbComp$ and $\dCyl = \overline{\Vman} \setminus \fSing$.
It is easy to see that $\dCyl$ is diffeomorphic to $S^1\times[0,1]\setminus F$, where $F$ is a finite subset of $S^1\times 1$, see Figures~\ref{fig:crlev_ext_nbh} and~\ref{fig:univ_cov_dCyl}.
\begin{figure}[ht]
\centerline{\includegraphics[width=12cm]{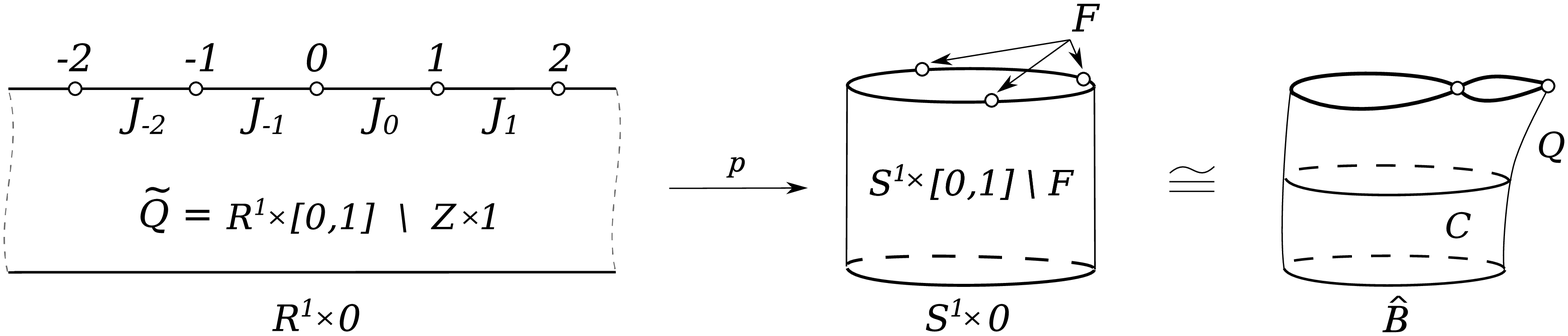}}
\caption{}
\protect\label{fig:univ_cov_dCyl}
\end{figure}
Let $p:\hdCyl\to\dCyl$ be the universal covering map for $\dCyl$.
Evidently, $\hdCyl$ is diffeomorphic with $\RRR\times[0,1] \setminus \ZZZ\times1$, see Figure~\ref{fig:univ_cov_dCyl}.
Let $b>0$ be the number of points in $F$.
It also equals to the number of connected components of $S^1\times 1\setminus F$.
Denote 
\[
J_i = (i,i+1) \times 1, \qquad i\in\ZZZ.
\]
Then 
\begin{equation}\label{equ:pJi_pJib}
p(J_i) = p(J_{i+b}), \ i\in\ZZZ.
\end{equation}
Now let $\dif\in \Stabilizer{\func,\hbComp}$.
Since $\dif$ is fixed on $\hbComp$, it follows that $\dif|_{\dCyl}$ lifts to a unique diffeomorphism $\hdif$ of $\hdCyl$ fixed on $\RRR^1\times 0$.
Then $\hdif$ ``shifts'' open intervals $\{J_i\}_{i\in\ZZZ}$ preserving their linear order.
In other words, there exists a unique $k\in\ZZZ$ such that $\hdif\bigl( J_i \bigr) \ = \ J_{i+k}$ for all $i\in\ZZZ$.
Define a map $\eta':\Stabilizer{\func,\hbComp} \longrightarrow \ZZZ$ by
\[
\eta'(\dif) = k.
\]
It is easy to check that $\eta'$ is in fact a homomorphism.

It follows from Eq.~\eqref{equ:pJi_pJib} that $\dif\in\uTriv$ if and only if $\eta'(\dif)$ is divided by $b$.
In other words
\begin{equation}\label{equ:uTriv_inv_bZ}
\uTriv = (\eta')^{-1}(b\ZZZ).
\end{equation}

Let us show that {\em $\eta'$ is a non-trivial homomorphism.}
Recall that there exists a Dehn twist $\tau \in \Stabilizer{\func,\hbComp}$ supported in $\bCyl$, see~\cite[\S~6]{Maksymenko:AGAG:2006}.
Then it is easy to see that $\eta'(\tau) = b$ or $-b$.
In particular, $\tau\in\uTriv$.

Hence the image of $\eta'$ is also a non-zero subgroup of $\ZZZ$, so $\eta'(\Stabilizer{\func,\hbComp}) = n\ZZZ$ for some $n\geq1$.
In particular, due to Eq.~\eqref{equ:uTriv_inv_bZ} $n$ must divide $b$.
Therefore the map $\eta:\Stabilizer{\func,\hbComp} \longrightarrow \ZZZ$ defined by
\[
\eta(\dif) = \eta'(\dif)/n
\]
is an epimorphism.

Property (1) for $\eta$ now follows from Eq.~\eqref{equ:uTriv_inv_bZ} with $m = b/n$.
It remains to check (2).

Let $\gdif,\dif\in \Stabilizer{\func,\hbComp}$ and $\hgdif,\hdif:\hdCyl\to\hdCyl$ be unique liftings of $\gdif|_{\dCyl}$ and $\dif|_{\dCyl}$ respectively fixed on $\RRR^1\times 0$.

Suppose there exists an isotopy $\{\gdif_t\}_{t\in[0,1]}$ in $\Stabilizer{\func,\hbComp}$ such that $\gdif_0 = \gdif$ and $\gdif_1 =\dif$ on $\hcrNbh$.
We claim that $\eta(\gdif) =\eta(\dif)$.

Indeed, let $\hgdif_t$ be the lifting of $\gdif_t|_{\dCyl}$ fixed on $\RRR^1\times 0$.
Then $\{\hgdif_t\}_{t\in[0,1]}$ is an isotopy between $\hgdif= \hgdif_0$ and $\hdif = \hgdif_1$.
Hence all $\hgdif_t$ shift boundary components $\{(i,i+1)\times 1\}_{i\in\ZZZ}$ of $\hdCyl$ in the same way, and so 
\[\eta(\gdif)= \eta(\gdif_0) = \eta(\gdif_t) = \eta(\gdif_1) =\eta(\dif).\]

Conversely, suppose $\eta(\gdif) =\eta(\dif)$.
Then $\gdif$ and $\dif$ interchange edges of $\crLev$ in the same way, and so by (1) of Lemma~\ref{lm:uTriv_dTriv_prop} $\gdif\usim \dif$.
Moreover, by (2) of that lemma $\gdif$ is isotopic to a diffeomorphism $\gdif'$ such that $\gdif' = \dif$ on $\crNbh$.
Hence $\gdif'\circ\dif^{-1}|_{\hcrNbh}$ is supported in a cylinder $\bCyl$, see Figures~\ref{fig:crlev_ext_nbh} and~\ref{fig:univ_cov_dCyl}, and so it is isotopic relatively to $\partial\bCyl$ to some degree $a$ of the Dehn twist $\tau$ mentioned above.
Therefore $\eta(\gdif'\circ\dif^{-1}) = \eta(\tau^a) = a k/n$.
However \[\eta(\gdif'\circ\dif^{-1}) = \eta(\gdif') - \eta(\dif) = \eta(\gdif) - \eta(\dif) = 0,\]
whence $a=0$.
This means that $\gdif'\circ\dif^{-1}|_{\bCyl}$ is isotopic to $\tau^0|_{\bCyl} = \id_{\bCyl}$ relatively to $\partial\bCyl$.
Hence by~\cite[Lemma~4.12(3)]{Maksymenko:AGAG:2006} that isotopy can be made $\func$-preserving.
Thus $\gdif'$ (and therefore $\gdif$) is isotopic in $\Stabilizer{\func,\hbComp}$ to some $\gdif''$ such that $\gdif''=\dif$ on $\hcrNbh$.
Lemma is completed.
\end{proof}

\section{Functions on $2$-disks and cylinders}\label{sect:funcs_2D_Cyl}
In this section we assume that $\Mman$ is either a $2$-disk or a cylinder, $\func\in\FFF(\Mman,\Pman)$, and $\hbComp$ is a connected component of $\partial\Mman$.
Our aim is to establish the following key result which will be proved in~\S\ref{sect:proof_prop_pr:pi0StabfB_in_ExtZ}.
\begin{proposition}\label{pr:pi0StabfB_in_ExtZ}
The group $\pi_0\StabilizerIsotId{\func,\partial\Mman}$ belongs to class $\classP$.
\end{proposition}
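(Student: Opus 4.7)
The plan is to prove the proposition by induction on the number of critical level components of $\func$. In the base cases, either $\func$ has no critical points (forcing $\Mman$ to be a cylinder and $\func$ a submersion, so that $\StabilizerIsotId{\func,\partial\Mman}$ deformation retracts onto the subgroup generated by the Dehn twist along the core circle, giving $\pi_0\cong\ZZZ\cong\{1\}\wrm{1}\ZZZ\in\classP$), or $\Mman$ is a disk with $\func$ having a single extremal critical point, in which case $\pi_0\StabilizerIsotId{\func,\partial\Mman}=1\in\classP$ by a radial-straightening argument.

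For the inductive step, note that both local extrema and boundary components of $\Mman$ appear as leaves of the Kronrod-Reeb graph $\fKRGraph$, so whenever $\func$ has more than one critical level there must exist a saddle-vertex $\crLev$ adjacent in $\fKRGraph$ to some boundary component $\hbComp$. Let $\crNbh$ be an $\func$-regular neighborhood of $\crLev$, $\bComp\subset\partial\crNbh$ the component facing $\hbComp$, $\bCyl$ the critical-point-free cylinder between them, and $\hcrNbh=\crNbh\cup\bCyl$. Lemma~\ref{lm:epi_SIsotId_f_hB__Z} applied to $\hbComp$ yields an epimorphism $\eta:\Stabilizer{\func,\hbComp}\to\ZZZ$ whose kernel, up to isotopy in $\Stabilizer{\func,\hbComp}$, consists of diffeomorphisms fixed on $\hcrNbh$; restricting $\eta$ to $\StabilizerIsotId{\func,\partial\Mman}$ produces a surjection onto a copy of $\ZZZ$, realized by iterated Dehn twists supported inside $\bCyl$ (which are automatically identity on any other boundary component of $\Mman$).

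Combining this with Lemma~\ref{lm:isot_rel_hcrNbh} to make the relevant isotopies $\hcrNbh$-relative, and decomposing diffeomorphisms fixed on $\hcrNbh$ over the connected components $W_1,\dots,W_k$ of $\Mman\setminus\Int\hcrNbh$ (setting $\func_j=\func|_{W_j}$), one obtains a short exact sequence
\[
1\longrightarrow\prod_{j=1}^{k}\pi_0\Stabilizer{\func_j,\partial W_j}\longrightarrow\pi_0\StabilizerIsotId{\func,\partial\Mman}\longrightarrow\ZZZ\longrightarrow 1,
\]
in which the $\ZZZ$-action on the kernel cyclically permutes the factors according to the rotation-by-$n$ action on the edges of $\crLev$ (with $m=b/n$ as in Lemma~\ref{lm:epi_SIsotId_f_hB__Z}), producing a wreath product of the form $A\wrm{m}\ZZZ$ on each nontrivial orbit.

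Each $W_j$ is a planar surface but may fail to be a disk or cylinder (e.g.\ a pair of pants), and when this happens I invoke Theorem~\ref{th:right_action_props}(4) — whose hypothesis is trivially satisfied since $\dim\partial W_j=1>0$ — to decompose $\pi_0\Stabilizer{\func_j,\partial W_j}$ as a finite direct product of factors $\pi_0\StabilizerIsotId{\func_{j,i},\partial B_{j,i}}$ over disk or cylinder subsurfaces $B_{j,i}\subset\Int W_j$, each with strictly fewer critical points than $\Mman$ (the points of $\crLev$ being excluded). The induction hypothesis then places each factor in $\classP$, and closure of $\classP$ under direct products and under $(-)\wrm{m}\ZZZ$ assembles everything into a group in $\classP$. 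The main obstacle is the careful bookkeeping of the $\ZZZ$-orbits on the pieces $W_j$: in particular, the distinguished piece containing the second boundary component of $\Mman$ in the cylinder case (which must be pointwise preserved and hence contributes a singleton orbit, i.e.\ a direct factor rather than a wreath factor), together with verifying that the resulting extension is literally of the form $A\wrm{m}\ZZZ$ with $A\in\classP$ rather than some more exotic semidirect product.
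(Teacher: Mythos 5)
Your overall strategy --- induct on the critical set, localize at the critical component $\crLev$ nearest the boundary, use the epimorphism $\eta$ of Lemma~\ref{lm:epi_SIsotId_f_hB__Z}, and decompose the complement of $\hcrNbh$ --- is the same as the paper's, but several steps are wrong or missing. First, both base cases are miscomputed. For a critical-point-free $\func$ on $S^1\times I$ the Dehn twist is \emph{not} an element of $\StabilizerIsotId{\func,\partial\Mman}=\Stabilizer{\func}\cap\DiffId(\Mman,\partial\Mman)$, since it is not isotopic to $\id_{\Mman}$ rel $\partial\Mman$; the correct group is trivial (Lemma~\ref{lm:pi0StabIsotId_simple_cases}(1)). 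For a disk with a single local extreme your radial-straightening argument only works for a \emph{non-degenerate} extreme; for a degenerate one (e.g.\ $(x^2+y^2)(x^2+2y^2)$, which satisfies Axiom~\AxCrPt) the answer is $\ZZZ$, not $1$ (Lemma~\ref{lm:pi0StabIsotId_simple_cases}(2b)). Both corrected answers still lie in $\classP$, but your argument as written does not cover all of $\FFF(\Mman,\Pman)$. Second, the surjection onto $\ZZZ$ is \emph{not} realized by Dehn twists supported in $\bCyl$: by the proof of Lemma~\ref{lm:epi_SIsotId_f_hB__Z} such a twist $\tau$ has $\eta(\tau)=\pm m$, so twists only realize the subgroup $m\ZZZ$. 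An element with $\eta=1$ (when $m>1$) necessarily permutes the edges of $\crLev$ and hence the non-invariant pieces --- which contradicts your own Dehn-twist description and is exactly why a wreath product appears at all.

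The decisive gap is the one you flag yourself: identifying the extension with $\bigl(\text{invariant part}\bigr)\times\bigl(A\wrm{m}\ZZZ\bigr)$. This is the actual content of the paper's argument and requires three things you do not supply: (i) every non-$\gdif$-invariant component of $\overline{\Mman\setminus\crNbh}$ has orbit of length exactly $m$ (Lemma~\ref{lm:nSurf_g_orbit}, proved by capping off to $S^2$ and counting positively invariant cells via the Euler characteristic); (ii) a generator $\gdif$ with $\eta(\gdif)=1$ that is the identity near all invariant pieces (Lemma~\ref{lm:g_with_eta_g_1}) --- this is what makes the invariant part split off as a genuine direct factor and makes the $\ZZZ$-action on the non-invariant part a pure shift rather than a shift composed with nontrivial automorphisms of the factors; and (iii) the verification that the resulting map is a well-defined isomorphism (Steps 1--4 of Lemma~\ref{lm:pi0StabfB_sructure}), which uses Lemma~\ref{lm:isot_rel_hcrNbh} to remove the dependence on the choice of representative. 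Note also that the correct bookkeeping is a \emph{single} wreath product $\bigl(\mathop{\times}_{q}A_q\bigr)\wrm{m}\ZZZ$ taken over all orbits at once; ``a wreath product on each nontrivial orbit'' would produce a quotient $\ZZZ^{b}$, whereas your exact sequence has a single $\ZZZ$. Finally, your detour through pairs of pants is unnecessary: since $\Mman$ is planar every simple closed curve separates, so each component of $\overline{\Mman\setminus\crNbh}$ meets $\crNbh$ in exactly one circle and is already a disk or a cylinder.
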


For the proof we need some preliminary statements.
\begin{lemma}\label{lm:StabIsotId_Stab}
\begin{enumerate}
\item[\rm(1)]
$\StabilizerIsotId{\func,\hbComp} = \Stabilizer{\func,\hbComp}$,
\item[\rm(2)]
$\pi_0 \StabilizerIsotId{\func,\hbComp} = \pi_0\StabilizerIsotId{\func,\partial\Mman}$.
\end{enumerate}
\end{lemma}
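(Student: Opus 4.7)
My strategy for both parts is to combine Theorem~\ref{th:right_action_props} with a connectedness fact for $\Diff(\Mman, \hbComp)$ in the two cases under consideration. Starting with part~(1), the inclusion $\StabilizerIsotId{\func,\hbComp} \subseteq \Stabilizer{\func,\hbComp}$ is automatic from the definitions, so only the reverse inclusion needs proof, and it will follow once I show that the ambient group $\Diff(\Mman, \hbComp)$ is path-connected. When $\Mman = \Disk$ the boundary circle $\hbComp = \partial\Mman$ and Smale's theorem gives the stronger conclusion that $\Diff(\Disk, \partial\Disk)$ is contractible. When $\Mman = \Cylinder$, denoting by $\hbComp' \subset \partial\Mman$ the other boundary circle, I would analyze the restriction Serre fibration $\mathrm{ev}\colon \Diff(\Mman, \hbComp) \to \Diff^{+}(\hbComp')$, $\dif \mapsto \dif|_{\hbComp'}$, whose fiber over $\id$ is $\Diff(\Mman, \partial\Mman)$. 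The tail
\[
\ZZZ \cong \pi_1\Diff^{+}(\Circle) \xrightarrow{~\partial~} \pi_0 \Diff(\Mman, \partial\Mman) \cong \ZZZ \longrightarrow \pi_0 \Diff(\Mman, \hbComp) \longrightarrow 0
\]
of the corresponding long exact sequence reduces the claim to showing that $\partial$ sends a generator to a generator; lifting the rotation loop $r_s(\theta) = \theta + 2\pi s$ to the explicit path $R_s(\theta, t) = (\theta + 2\pi s t, t)$ in $\Diff(\Mman, \hbComp)$ identifies $R_1$ with a Dehn twist, so $\partial$ is an isomorphism and $\pi_0 \Diff(\Mman, \hbComp) = 0$.

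For part~(2), I would first apply Theorem~\ref{th:right_action_props}(1) with $\Xman = \hbComp$ to obtain
\[ \OrbitPathComp{\func, \hbComp}{\func} \,=\, \OrbitPathComp{\func, \hbComp \cup \partial\Mman}{\func} \,=\, \OrbitPathComp{\func, \partial\Mman}{\func}, \]
so the path components of $\Orbit{\func, \hbComp}$ and $\Orbit{\func, \partial\Mman}$ containing $\func$ coincide as pointed topological spaces. Both $\Xman = \hbComp$ and $\Xman = \partial\Mman$ have $\dim\Xman = 1$, so the hypothesis of Theorem~\ref{th:right_action_props}(4) is satisfied in each case, yielding natural isomorphisms $\pi_0 \StabilizerIsotId{\func,\Xman} \cong \pi_1\Orbit{\func,\Xman}$ induced by the Serre fibration $\Diff(\Mman, \Xman) \to \Orbit{\func, \Xman}$. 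Chaining these produces
\[ \pi_0\StabilizerIsotId{\func,\hbComp} \,\cong\, \pi_1\Orbit{\func,\hbComp} \,=\, \pi_1\Orbit{\func,\partial\Mman} \,\cong\, \pi_0\StabilizerIsotId{\func,\partial\Mman}, \]
and the naturality of the long exact sequence with respect to the inclusion $\Diff(\Mman, \partial\Mman) \hookrightarrow \Diff(\Mman, \hbComp)$ ensures that this composite is realized by the inclusion of stabilizers.

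The only nontrivial point I expect is in part~(1) for the cylinder, namely verifying that the connecting map $\partial$ really carries a generator of $\pi_1\Diff^{+}(\Circle)$ to the Dehn twist generator of $\pi_0\Diff(\Mman,\partial\Mman)$; this amounts to a routine but careful computation with the explicit lift $R_s$ above. Everything else in the plan is formal manipulation of the cited statements from Theorem~\ref{th:right_action_props}.
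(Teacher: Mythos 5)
Your proposal is correct and follows essentially the same route as the paper: part (2) is exactly the paper's argument, chaining statements 1) and 4) of Theorem~\ref{th:right_action_props} through $\pi_1\Orbit{\func,\hbComp}=\pi_1\Orbit{\func,\partial\Mman}$, and part (1) likewise reduces to the path-connectedness of $\Diff(\Mman,\hbComp)$ via Smale and Gramain. The only cosmetic difference is that for the cylinder you compute the connecting homomorphism of the restriction fibration explicitly to kill the Dehn twist, whereas the paper first isotopes $\dif$ to be fixed near $\hbComp$ using statement 5) of Theorem~\ref{th:right_action_props} and then cites Gramain directly.
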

\begin{proof}
(1) Recall that by definition $\StabilizerIsotId{\func,\hbComp} := \Stabilizer{\func,\hbComp} \cap  \DiffId(\Mman,\hbComp)$.
Therefore we should only prove that each $\dif\in\Stabilizer{\func,\hbComp}$ is isotopic to $\id_{\Mman}$ relatively to $\partial\Mman$.
By 5) of Theorem~\ref{th:right_action_props} $\dif$ is isotopic in $\Stabilizer{\func,\hbComp}$ to a diffeomorphism $\dif'$ fixed on some neighbourhood of $\hbComp$.
Since $\Mman$ is either a $2$-disk or a cylinder, it follows from~\cite{Smale:ProcAMS:1959, Gramain:ASENS:1973} that then $\dif'$ is isotopic to $\id_{\Mman}$ relatively to $\hbComp$.

(2) If $\Mman=\Disk$, then $\hbComp = \partial\Mman$ and the statement is trivial.
Suppose $\Mman=S^1\times I$.
Then by 1) and 4) of Theorem~\ref{th:right_action_props} we have the following isomorphisms:
\[
\pi_0 \StabilizerIsotId{\func,\hbComp} \ \cong \
\pi_1 \OrbitPathComp{\func,\hbComp} \ \cong \
\pi_1 \OrbitPathComp{\func,\partial\Mman} \ \cong \
\pi_0 \StabilizerIsotId{\func,\partial\Mman}.
\]
Lemma is completed.
\end{proof}

Thus due to (2) for the proof of Proposition~\ref{pr:pi0StabfB_in_ExtZ} it suffices to show that $\pi_0 \StabilizerIsotId{\func,\hbComp} \in\classP$.
Of course, this replacement is non-trivial only for $\Mman=S^1\times I$.

Let $\Zman$ be the union of all critical components of all level sets of $\func$, $\Uman$ be a connected component of $\Mman\setminus\Zman$ containing $\hbComp$, and $\crLev$ be that unique critical component from $\Zman$ which intersects $\overline{\Uman}$.
Roughly speaking, $\crLev$ is the ``closest'' to $\hbComp$ critical component of some level set of $\func$.

Let also $\crNbh$ be an $\func$-regular neighbourhood of $\crLev$ that does not contain $\hbComp$ and
\[
\hcrNbh = \crNbh \cup \Uman,
\qquad
\bCyl = \overline{\Uman \setminus\Nman}.
\]
Then we are in the notations and under assumptions of Lemma~\ref{lm:epi_SIsotId_f_hB__Z} for a special case when $\hbComp$ is a boundary component of $\partial\Mman$.

By (1) or Lemma~\ref{lm:StabIsotId_Stab} each $\dif\in\Stabilizer{\func,\hbComp}$ is isotopic to $\id_{\Mman}$, whence by (4) of Lemma~\ref{lm:uTriv_dTriv_prop} we get that $\uTriv=\dTriv$.
Moreover, by Lemma~\ref{lm:isot_rel_hcrNbh} there exists an epimorphism
\[ \eta: \Stabilizer{\func,\hbComp}  \longrightarrow \ZZZ\]
satisfying $\uTriv = \eta^{-1}(m\ZZZ)$ for some $m\geq1$, and so
\[
\Stabilizer{\func,\hbComp} / \uTriv \ = \ \Stabilizer{\func,\hbComp} / \dTriv \ \cong \ \ZZZ_m.
\]

\begin{lemma}\label{lm:nSurf_g_orbit}
Let $\gdif\in\Stabilizer{\func,\hbComp}$ be such that $\gdif(\nSurf)\cap\nSurf = \varnothing$ for some connected component $\nSurf$ of $\overline{\Mman\setminus\crNbh}$.
If $\eta(\gdif)=1$, then $\gdif^i(\nSurf)\cap\nSurf = \varnothing$ for $i=1,\ldots,m-1$, and $\gdif^{m}(\nSurf)=\nSurf$.
\end{lemma}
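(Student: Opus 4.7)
First I will establish $\gdif^m(\nSurf)=\nSurf$. Since $\eta(\gdif^m)=m\in m\ZZZ$, Lemma~\ref{lm:epi_SIsotId_f_hB__Z}(1) gives $\gdif^m\in\uTriv$; and because $\Mman$ is a disk or cylinder, the neighbourhood $\crNbh$ embeds in $\RRR^2$, so Lemma~\ref{lm:uTriv_dTriv_prop}(4) upgrades this to $\gdif^m\in\dTriv$. Hence $\gdif^m$ leaves each boundary circle of $\crNbh$ positively invariant. Since each connected component of $\overline{\Mman\setminus\crNbh}$ is uniquely determined by the set of $\partial\crNbh$-circles lying in its boundary, $\gdif^m$ must preserve every such component; in particular $\gdif^m(\nSurf)=\nSurf$.

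The harder claim $\gdif^i(\nSurf)\ne\nSurf$ for $1\le i<m$ I prove by contradiction. Let $r$ be the smallest positive integer with $\gdif^r(\nSurf)=\nSurf$; by the preceding paragraph and the hypothesis $\gdif(\nSurf)\cap\nSurf=\varnothing$ we have $r\ge 2$ and $r\mid m$. Assume $r<m$. Using the combinatorial setup of the proof of Lemma~\ref{lm:epi_SIsotId_f_hB__Z}, the boundary circle $\bComp\subset\partial\crNbh$ bounding $\bCyl$ is cyclically divided into $b=mn$ arcs $p(J_0),\dots,p(J_{b-1})$, and since $\eta(\gdif)=1$, the map $\gdif$ acts on these arcs as the shift by $n$ in $\ZZZ_b$. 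Let $I_\nSurf\subset\ZZZ_b$ be the set of those arc-indices whose edge of $\crLev$ meets $\nSurf$ on its non-$\Vman$ side. Then $\gdif^k(\nSurf)=\nSurf$ is equivalent to $I_\nSurf+kn=I_\nSurf$ in $\ZZZ_b$, so for each $\gdif$-orbit $O\cong\ZZZ_m$ on arcs the intersection $I_\nSurf\cap O$ is either empty or a coset of $r\ZZZ_m\subset\ZZZ_m$ of size $m/r\ge 2$. Fix such an $O$: then $\nSurf$ meets $\bComp$ in $m/r\ge 2$ arcs evenly spaced around $\bComp$ with cyclic gap $rn$.

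Because $\Mman$ is a disk or cylinder (genus~$0$), these $m/r\ge 2$ ``bridges'' between $\nSurf$ and $\bComp$ force $\overline\Vman\cup\nSurf$ to disconnect $\Mman$ into exactly $m/r$ cyclically arranged open ``sectors'', one lying between each pair of consecutive arcs of $I_\nSurf$. The component $\gdif(\nSurf)$ is connected and disjoint from $\nSurf\cup\overline\Vman$, so it must lie entirely in a single sector. But $I_{\gdif(\nSurf)}=I_\nSurf+n$, together with $0<n<rn$, places exactly one arc of $I_{\gdif(\nSurf)}$ in each of the $m/r$ sectors; hence $\gdif(\nSurf)$ would have to meet $\bComp$ in arcs belonging to every sector, contradicting its connectedness. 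Therefore $r=m$, completing the proof. The main obstacle is making this ``$m/r$ sectors'' decomposition rigorous from the genus-$0$ topology of $\Mman$: the disk case reduces to a Jordan-curve-type argument, while the cylinder case needs a little extra care because $\Mman\setminus\overline\Vman$ may itself be an annulus rather than a disk.
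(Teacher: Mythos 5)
Your first paragraph (the claim $\gdif^m(\nSurf)=\nSurf$) is correct and is essentially the paper's own argument; note only that you do not need part (4) of Lemma~\ref{lm:uTriv_dTriv_prop}, since part (3) already gives $\uTriv\subset\dTriv$ unconditionally.

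The second part has a genuine gap. Your encoding $I_\nSurf\subset\ZZZ_b$ records only those arcs in the boundary walk of $\Vman$ whose underlying edge of $\crLev$ has the face containing $\nSurf$ on its opposite side. But a component $\nSurf$ of $\overline{\Mman\setminus\crNbh}$ need not be adjacent to $\Vman$ across any edge of $\crLev$: the two sides of an edge carry opposite signs of $\func-c$, so the adjacency graph of the faces of $\crLev$ in the capped-off sphere is bipartite, and any face at dual distance $\geq 2$ from the face of $\Vman$ shares no edge with it (nested loops at a degenerate saddle, or several saddles, produce such faces, and they can be permuted nontrivially by $\gdif$). For such $\nSurf$ one has $I_\nSurf=\varnothing$, your equivalence ``$\gdif^k(\nSurf)=\nSurf\iff I_\nSurf+kn=I_\nSurf$'' becomes vacuous, and the sector argument says nothing. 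Even when $I_\nSurf\neq\varnothing$, the count of ``exactly $m/r$ sectors'' fails if $I_\nSurf$ meets more than one $\langle\gdif\rangle$-orbit of arcs, and --- as you yourself concede --- the passage from the cyclic order of arcs to a decomposition of $\Mman$ into sectors is unjustified: $\nSurf$ does not actually meet $\bComp$ or $\crLev$, and the closure of a face need not be an embedded disk because a boundary walk may traverse an edge twice. The paper avoids all of this by a global argument: cap off $\partial\Mman$ to get $\hMman=S^2$, take the CW-structure whose cells are those of $\crLev$ together with the components of $\hMman\setminus\crLev$, and invoke \cite[Proposition~5.4]{Maksymenko:MFAT:2010}, by which an orientation-preserving cellular homeomorphism homotopic to the identity either leaves every cell positively invariant or has exactly $\chi(S^2)=2$ positively invariant cells. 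The two invariant cells of $\hgdif$ remain invariant for $\hgdif^i$ and miss $\nSurf$, so $\gdif^i(\nSurf)=\nSurf$ would create a third, forcing $\gdif^i\in\dTriv=\eta^{-1}(m\ZZZ)$ and hence $m\mid i$. To repair your proof you would need either to import that dichotomy or to run a combinatorial argument over \emph{all} faces of $\crLev$, not just those adjacent to $\Vman$.
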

\begin{proof}
Notice that under assumption of lemma $\gdif \not\in\dTriv$, whence $m>1$.
Moreover, $\eta(\gdif^m) = m \in m\ZZZ$, and so $\gdif^m\in\dTriv$.
Therefore $\gdif^m(\nSurf) = \nSurf$.

It remains to consider the case $i\in\{1,\ldots,m-1\}$.
Let $\hMman$ be a closed surface obtained by gluing every connected component of $\partial\Mman$ with a $2$-disk.
Since $\Mman$ is either a $2$-disk or a cylinder, we obtain that $\hMman$ is a $2$-sphere.
Then $\hMman\setminus\crLev$ if a union of open $2$-disks, and so we have a cellular subdivision of $\hMman$ by $0$- and $1$-cells of $\crLev$ and $2$-cells being connected components of $\hMman\setminus\crLev$.

Let $\dif\in\Stabilizer{\func,\hcrNbh}$.
Since $\dif$ leaves invariant boundary components of $\partial\Mman$, it extends to a certain homeomorphism $\hdif$ of all of $\hMman$ which preserves orientation of $\hMman$, and therefore is also homotopic to $\id_{\hMman}$.
Then by~\cite[Proposition~5.4]{Maksymenko:MFAT:2010} either
\begin{itemize}
\item[(a)]
all cells are positively invariant for $\hdif$, or
\item[(b)]
the number of positively invariant cells of $\hdif$ is equal to the Euler characteristic of $\hMman$, i.e. to $2$.
\end{itemize}
In particular this holds for $\dif = \gdif^i$, $i=1,\ldots,m-1$.

Now let $\delta_0,\delta_1$ be positively invariant cells of $\hgdif$.
Then they are also positively invariant for $\hgdif^i$, $i=1,\ldots,m-1$.
Notice also that these cells do not intersect $\nSurf$, since $\gdif(\nSurf) \cap \nSurf = \varnothing$.

Therefore if we assume that $\gdif^{i}(\nSurf) = \nSurf$ for some $i=1,\ldots,m-1$, then $\hgdif^i$ would have at least $3$ positively invariant cells of $\hMman$, and by (a) all other cells of $\hMman$ must also be $\hgdif^i$-invariant.
But this would mean that $\gdif^{i} \in \dTriv$ which is possible only if $i$ is a multiple of $m$.
We get a contradiction with the assumption $i\in\{1,\ldots,m-1\}$.
Hence $\gdif^{i}(\nSurf) \cap \nSurf = \varnothing$ for $1 \leq i \leq m-1$.
\end{proof}

Fix any $\gdif\in\Stabilizer{\func,\hbComp}$ with $\eta(\gdif)=1$ and let 
\begin{equation}\label{equ:g_inv_comp}
\bCyl = \iSurf_0, \ \iSurf_1, \ \ldots, \ \iSurf_a
\end{equation}
be all $\gdif$-invariant connected components of $\overline{\Mman\setminus\crNbh}$, 
\[
\iSurf = \iSurf_1 \cup \cdots \cup \iSurf_a
\]
be the union of all these components except for $\bCyl$, and
\begin{align*}
\hiSurf_i &= \iSurf_i\cap\partial\crNbh, &
\hiSurf &= \iSurf\cap\partial\crNbh,
\end{align*}
see Figure~\ref{fig:g_inv_noninv_comps}.
By (1) of Lemma~\ref{lm:epi_SIsotId_f_hB__Z} these notation does not depend on a particular choice of such $\gdif$.
\begin{figure}[ht]
\centerline{\includegraphics[width=9cm]{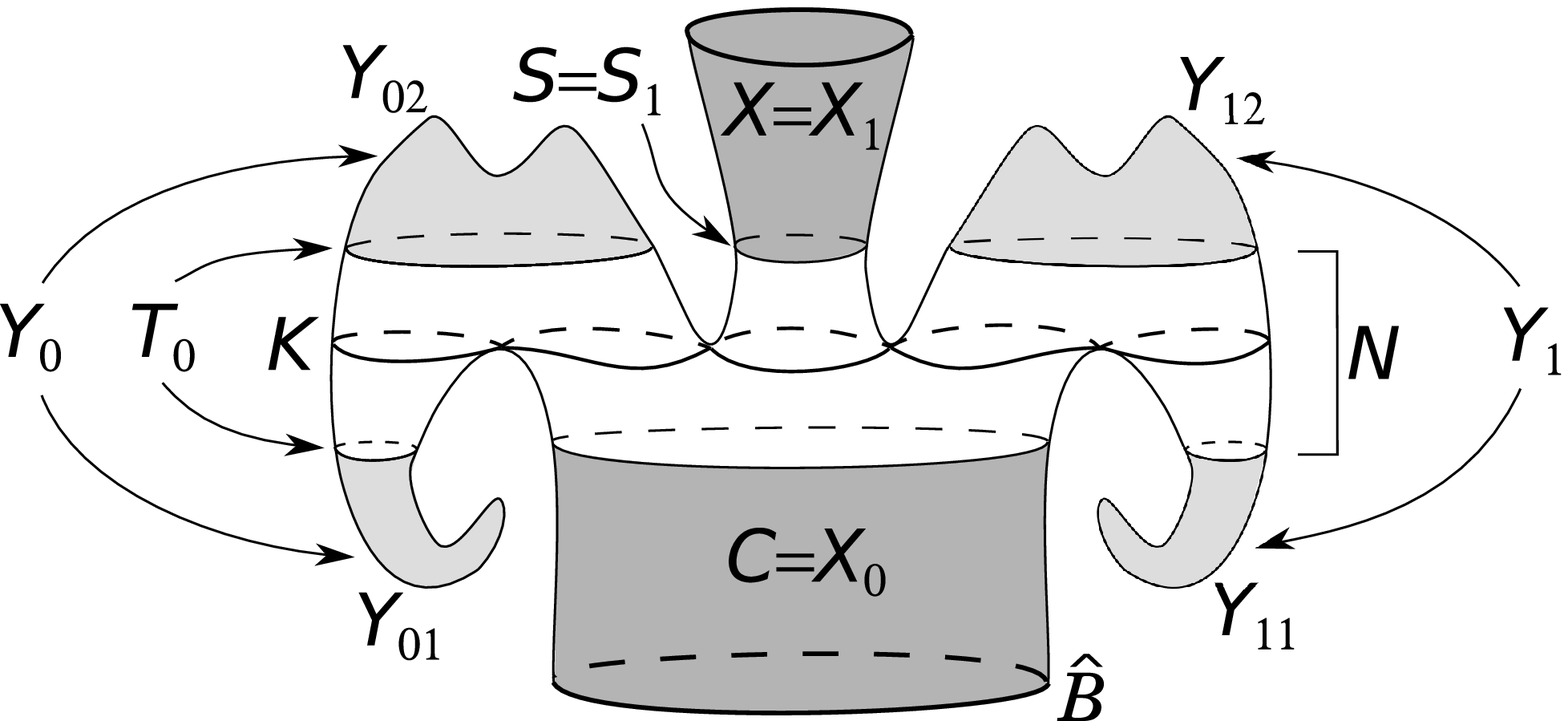}}
\caption{}
\protect\label{fig:g_inv_noninv_comps}
\end{figure}

\begin{lemma}\label{lm:g_with_eta_g_1}
There exists $\gdif\in\Stabilizer{\func,\hbComp}$ fixed near $\iSurf$ and satisfying $\eta(\gdif)=1$.
\end{lemma}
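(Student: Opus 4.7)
Pick any $\tilde{\gdif}\in\Stabilizer{\func,\hbComp}$ with $\eta(\tilde{\gdif})=1$, whose existence is ensured by surjectivity of $\eta$ (Lemma~\ref{lm:epi_SIsotId_f_hB__Z}). The plan is to modify $\tilde{\gdif}$ near $\iSurf$ using the flow construction of~\cite{Maksymenko:ProcIM:ENG:2010} so that the result equals the identity on a neighborhood of $\iSurf$ while coinciding with $\tilde{\gdif}$ on $\dCyl$. This preserves the value of $\eta$, since by its construction in the proof of Lemma~\ref{lm:epi_SIsotId_f_hB__Z} the homomorphism $\eta(\dif)$ depends only on $\dif|_{\dCyl}$. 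By Lemma~\ref{lm:StabIsotId_Stab}(1) we have $\tilde{\gdif}\in\fStabIsotId$, so the flow construction recalled in the proof of Lemma~\ref{lm:isot_rel_hcrNbh} provides a smooth $\func$-preserving flow $\aFlow:\Mman\times\RRR\to\Mman$ and a smooth function $\alpha:\Mman\to\RRR$ with $\tilde{\gdif}(x)=\aFlow(x,\alpha(x))$.

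Each $\iSurf_i$ for $i\geq 1$ lies on the opposite side of $\crLev$ from $\bCyl$ and is therefore disjoint from $\dCyl$. Choose pairwise disjoint open neighborhoods $\Wman_i\subset\Mman\setminus\dCyl$ of $\iSurf_i$ for $i=1,\ldots,a$, together with a smooth cutoff $\rho:\Mman\to[0,1]$ that vanishes on a smaller neighborhood of each $\iSurf_i$ and equals $1$ on $\Mman\setminus\bigcup_i\Wman_i$; we further require $\rho$ to be constant on each connected component of every level set of $\func$. Define
\[
\gdif(x) := \aFlow\bigl(x,\rho(x)\alpha(x)\bigr).
\]
Then $\gdif$ is $\func$-preserving, equals the identity on $\{\rho=0\}$ (a neighborhood of $\iSurf$), and coincides with $\tilde{\gdif}$ on $\{\rho=1\}\supset\dCyl\cup\hbComp$. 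Hence $\gdif\in\Stabilizer{\func,\hbComp}$ and $\gdif|_{\dCyl}=\tilde{\gdif}|_{\dCyl}$, which yields $\eta(\gdif)=\eta(\tilde{\gdif})=1$.

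The main obstacle is verifying that $\gdif$ is in fact a diffeomorphism. Each flow orbit of $\aFlow$ lies within a single connected component of a level set of $\func$, hence within one of $\bCyl$, $\crNbh$, some $\iSurf_j$, or a non-invariant component of $\overline{\Mman\setminus\crNbh}$, because level sets of $\func$ cannot cross the regular level set $\partial\crNbh$. Since $\rho$ was chosen constant on such components, the shift function $\rho(x)\alpha(x)$ restricts on every flow orbit to a scalar multiple of $\alpha|_{\text{orbit}}$. The linear interpolation $\dif_t(x):=\aFlow(x,((1-t)+t\rho(x))\alpha(x))$ connects $\tilde{\gdif}=\dif_0$ with $\gdif=\dif_1$ through maps that on each orbit remain orientation-preserving (since $\tilde{\gdif}$ is and rescaling the shift by a factor in $[0,1]$ does not change orientation of the restriction). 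The standard flow framework from~\cite{Maksymenko:ProcIM:ENG:2010} (openness of the set of shift functions defining diffeomorphisms) then ensures that each $\dif_t$, and in particular $\gdif$, is a diffeomorphism of $\Mman$.
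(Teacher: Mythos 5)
Your construction hinges on writing $\tilde{\gdif}(x)=\aFlow(x,\alpha(x))$ for an \emph{arbitrary} $\tilde{\gdif}\in\Stabilizer{\func,\hbComp}$ with $\eta(\tilde{\gdif})=1$, and this step fails. The shift-function representation of \cite{Maksymenko:ProcIM:ENG:2010}, recalled in the proof of Lemma~\ref{lm:isot_rel_hcrNbh}, characterizes the elements of $\fStabId$, i.e.\ diffeomorphisms joined to $\id_{\Mman}$ by a path of \emph{$\func$-preserving} diffeomorphisms. Lemma~\ref{lm:StabIsotId_Stab}(1) only gives $\tilde{\gdif}\in\DiffId(\Mman,\hbComp)$, which is much weaker and does not place $\tilde{\gdif}$ in $\fStabId$. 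Worse, for $m>1$ no element with $\eta=1$ can admit such a representation at all: any map of the form $x\mapsto\aFlow(x,\alpha(x))$ sends each orbit of $\aFlow$ into itself and hence preserves every connected component of every level set of $\func$, whereas an element with $\eta(\tilde{\gdif})=1$ lies outside $\dTriv$ and permutes the components $\nSurf_{j,\qind}$ (and the edges of $\crLev$) nontrivially. So the function $\alpha$ you propose to cut off simply does not exist, and the argument collapses precisely in the case $m>1$ where the lemma has content. (The injectivity of $x\mapsto\aFlow(x,\rho(x)\alpha(x))$ on closed orbits for intermediate values of the cutoff would be a further issue, but it is moot given the first point.)

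The paper's proof avoids the flow entirely and is worth comparing. Since the components $\iSurf_1,\ldots,\iSurf_a$ are by definition invariant under every $\dif$ with $\eta(\dif)=1$, and $\dif$ preserves each circle of $\hiSurf$ together with its orientation, one first isotopes $\dif$ inside $\Stabilizer{\func,\hbComp}$ to a $\dif'$ fixed on a neighbourhood of $\hiSurf$, and then replaces $\dif'|_{\iSurf}$ by $\id_{\iSurf}$; this surgery is supported away from $\dCyl$, on which $\eta$ is computed, so $\eta$ is unchanged. If you want to keep a flow-theoretic flavour, the flow can only be applied piecewise to the restriction of $\tilde{\gdif}$ to invariant pieces after such an isotopy, which essentially reproduces the paper's isotope-and-replace step rather than shortcutting it.
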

\begin{proof}
Let $\dif\in\Stabilizer{\func,\hbComp}$ be any element with $\eta(\dif)=1$.
Then $\dif$ leaves invariant every connected component of $\hiSurf$, and preserves their orientation.
Therefore $\dif$ is isotopic in $\Stabilizer{\func,\hbComp}$ to a diffeomorphism $\dif'$ fixed on some neighbourhood of $\hiSurf$.
Now change $\dif'$ on $\iSurf$ by the identity and denote the obtained diffeomorphism by $\gdif$.
Then $\gdif\in\Stabilizer{\func,\hbComp}$ and $\eta(\gdif)=1$.
\end{proof}

Let $\gdif\in\Stabilizer{\func,\hbComp}$ be such that $\eta(\gdif)=1$.
It follows from Lemma~\ref{lm:nSurf_g_orbit} that connected components of $\overline{\Mman\setminus\crNbh}$ that are \emph{not $\gdif$-invariant} can be enumerated as follows:
\begin{equation}\label{equ:non_fixed_vertices}
\begin{matrix}
 \nSurf_{0,1} & \nSurf_{0,2}& \cdots & \nSurf_{0,b} \\
 \nSurf_{1,1} & \nSurf_{1,2}& \cdots & \nSurf_{1,b} \\
 \cdots    & \cdots   & \cdots & \cdots \\
 \nSurf_{m-1,1} & \nSurf_{m-1,2} & \cdots & \nSurf_{m-1,b}
\end{matrix}
\end{equation}
so that 
\[\gdif(\nSurf_{j,\qind}) = \gdif(\nSurf_{j+1 \ \mathrm{mod} \ m, \ \qind})\]
for all $j,\qind$.
In other words, $\gdif$ cyclically shifts down the rows of Eq.~\eqref{equ:non_fixed_vertices}, see Figure~\ref{fig:g_inv_noninv_comps}.
Denote
\[ 
\nSurf_j =  \nSurf_{j,1} \cup \nSurf_{j,2} \cup \cdots \cup \nSurf_{j,b}, 
\qquad\qquad \nSurf = \bigcup_{j=0}^{m-1} \nSurf_j,
\]
\[
\hnSurf_{j,\qind} = \partial\nSurf_{j,\qind} \cap \crNbh,
\qquad\qquad
\hnSurf_{j} = \partial\nSurf_{j} \cap \crNbh.
\]
Then
\begin{align*}
\gdif^{j}(\nSurf_0) &= \nSurf_{j},
&
\nSurf_{j} \cap \nSurf_{j'}  &= \varnothing
\end{align*}
for $j\not=j' = 0,\ldots,m-1$.
Consider also the restrictions
\begin{align*}
\func_{\iSurf} &= \func|_{\iSurf}: \iSurf\to\Pman,
&
\func_{\nSurf_j} &= \func|_{\nSurf_j}: \nSurf_j \to\Pman.
\end{align*}

\begin{lemma}\label{lm:pi0StabfB_sructure}
In the notation above there exists an isomorphism
\begin{equation}\label{equ:main_isomorphism}
\isoStabfB: \pi_0\Stabilizer{\func,\hbComp} 
\longrightarrow
\pi_0\Stabilizer{\func_{\iSurf},\hiSurf} \times \bigl( \pi_0\Stabilizer{\func_{\nSurf_0}, \hnSurf_0} \wrm{m} \ZZZ\bigr).
\end{equation}
For $m=1$, $\psi$ reduces to an isomorphism
\[
\isoStabfB: \pi_0\Stabilizer{\func,\hbComp} 
\longrightarrow \pi_0\Stabilizer{\func_{\iSurf},\hiSurf} \times  \ZZZ.
\]
\end{lemma}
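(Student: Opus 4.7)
The plan is to define the isomorphism $\isoStabfB$ on the nose, verify it descends to $\pi_0$, check multiplicativity, and exhibit an explicit inverse. The machinery of Lemma~\ref{lm:epi_SIsotId_f_hB__Z}(2) will be used repeatedly, together with Lemma~\ref{lm:isot_rel_hcrNbh} to combine isotopies across the pieces $\iSurf$, $\bCyl$, and the $\nSurf_i$.

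First I would fix once and for all the element $\gdif$ from Lemma~\ref{lm:g_with_eta_g_1}, so $\eta(\gdif)=1$ and $\gdif$ is the identity near $\iSurf$. Given $\dif\in\Stabilizer{\func,\hbComp}$, set $n:=\eta(\dif)$. Since $\eta(\dif)=\eta(\gdif^n)$, Lemma~\ref{lm:epi_SIsotId_f_hB__Z}(2) provides an isotopy of $\dif$ in $\Stabilizer{\func,\hbComp}$ to a diffeomorphism $\dif'$ which coincides with $\gdif^n$ on a neighbourhood of $\hcrNbh$. In particular $\dif'$ permutes the components of $\overline{\Mman\setminus\crNbh}$ exactly as $\gdif^n$ does: it preserves each $\iSurf_k$ individually (being the identity near $\hiSurf$), and it sends $\nSurf_i$ to $\nSurf_{i+n\,\mathrm{mod}\,m}$. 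Then define
\[
\isoStabfB([\dif]) \;=\; \bigl([\dif'|_{\iSurf}],\, (\alpha_{\dif},\,n)\bigr), \qquad
\alpha_{\dif}(i) \;=\; \bigl[\gdif^{-n-i}\circ\dif'\circ\gdif^{i}|_{\nSurf_0}\bigr],
\]
which sits in the claimed target since the composite sends $\nSurf_0$ to itself and is the identity near $\hnSurf_0$. Well-definedness at the $\pi_0$ level follows because two choices of the representative $\dif'$ differ by a diffeomorphism isotopic to the identity rel.\ a neighbourhood of $\hcrNbh$, and Lemma~\ref{lm:isot_rel_hcrNbh} allows us to split such an isotopy between the disjoint pieces.

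Next I would check that $\isoStabfB$ is a homomorphism. If $\eta(\dif_j)=n_j$ and $\dif'_j$ are the chosen representatives, then $\dif'_1\circ\dif'_2$ agrees with $\gdif^{n_1+n_2}$ near $\hcrNbh$ and so serves as a valid representative for $\dif_1\dif_2$. Inserting $\gdif^{n_2+i}\circ\gdif^{-(n_2+i)}$ into
\[
\alpha_{\dif_1\dif_2}(i) \;=\; \gdif^{-(n_1+n_2)-i}\circ\dif'_1\circ\dif'_2\circ\gdif^{i}\big|_{\nSurf_0}
\]
splits it as $\alpha_{\dif_1}(i+n_2)\cdot\alpha_{\dif_2}(i)$, matching the multiplication formula in $\aGrp\wrm{m}\ZZZ$ precisely. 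The $\iSurf$-factor contributes a plain direct product because $\gdif^n$ is the identity on $\iSurf$, and the total twist $n=\eta(\dif_1)+\eta(\dif_2)$ adds in $\ZZZ$.

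For bijectivity I would construct an explicit inverse. Given $\bigl(\xi,(\alpha,n)\bigr)$, assemble a diffeomorphism piecewise: $\dif=\gdif^n$ on $\hcrNbh$, $\dif=\xi$ on $\iSurf$, and
\[
\dif|_{\nSurf_i} \;=\; \gdif^{n+i}\circ\alpha(i)\circ\gdif^{-i}, \qquad i=0,\ldots,m-1.
\]
Smooth matching along $\hiSurf$ is automatic because $\gdif^n=\id$ and $\xi=\id$ there; along $\hnSurf_i$ it holds because $\alpha(i)=\id$ near $\hnSurf_0$. Each piece preserves $\func$, so $\dif\in\Stabilizer{\func,\hbComp}$; this gives surjectivity. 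Injectivity follows because if $\isoStabfB([\dif])$ is the unit, then after passing to $\dif'$ the restrictions to $\iSurf$, to $\nSurf_0$, and (via Lemma~\ref{lm:epi_SIsotId_f_hB__Z}(2)) to $\bCyl$ are each isotopic to the identity rel.\ boundary, and Lemma~\ref{lm:isot_rel_hcrNbh} glues these into an isotopy of $\dif'$ to $\id_{\Mman}$ inside $\Stabilizer{\func,\hbComp}$. The special case $m=1$ is automatic since $\Maps{\ZZZ_1}{A}=A$ and the $\ZZZ$-action on it is trivial, so $A\wrm{1}\ZZZ\cong A\times\ZZZ$.

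The main obstacle is bookkeeping: verifying that the choice of $\dif'$ near $\hcrNbh$ does not entangle the three independent pieces $\iSurf$, $\nSurf_0$, and $\bCyl$, and that the inverse formula truly produces a smooth $\func$-preserving diffeomorphism. Both points reduce to checking that the matching prescriptions on $\hiSurf$ and on each $\hnSurf_i$ are compatible, which is precisely what the choice of $\gdif$ fixed near $\iSurf$ guarantees. All nontrivial inputs needed have been prepared in Lemmas~\ref{lm:uTriv_dTriv_prop}, \ref{lm:isot_rel_hcrNbh}, and~\ref{lm:epi_SIsotId_f_hB__Z}.
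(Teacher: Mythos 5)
Your proposal is correct and follows essentially the same route as the paper's proof: the same normalization of a representative to agree with $\gdif^{n}$ near $\hcrNbh$ via Lemma~\ref{lm:epi_SIsotId_f_hB__Z}(2), the same formula $\alpha_{\dif}(i)=[\gdif^{-n-i}\circ\dif'\circ\gdif^{i}|_{\nSurf_0}]$ for the wreath-product coordinate, the same multiplication check, and the same explicit piecewise inverse for surjectivity. (You even correctly invoke part (2) of Lemma~\ref{lm:epi_SIsotId_f_hB__Z} where the paper's text cites part (1).)
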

\begin{proof}
Choose $\gdif\in\Stabilizer{\func,\hbComp}$ fixed near $\iSurf$ and satisfying $\eta(\gdif)=1$, see Lemma~\ref{lm:g_with_eta_g_1}.

Let $\gamma\in \pi_0\Stabilizer{\func,\hbComp}$.
By (1) of Lemma~\ref{lm:epi_SIsotId_f_hB__Z} we can take a representative $\dif\in\gamma$ such that $\gdif^{-\eta(\dif)}\circ\dif$ is fixed on some neighbourhood of $\hcrNbh$.
As $\gdif$ is fixed near $\iSurf$, we have that 
\begin{align*}
\gdif^{-\eta(\dif)}\circ\dif(\iSurf) = \dif(\iSurf) &=\iSurf, &
\gdif^{-\eta(\dif)} \circ \dif(\nSurf_j) &= \nSurf_j,
\end{align*}
for all $j$, whence
\begin{align*}
\dif|_{\iSurf} &\,\in\, \Stabilizer{\func_{\iSurf},\hiSurf},
&
\gdif^{-j-\eta(\dif)} \circ \dif \circ \gdif^{j}|_{\nSurf_0}
&\,\in\, \Stabilizer{\func_{\nSurf_0},\hnSurf_0}
\end{align*}
for $j=1,\ldots,m$.
Therefore we obtain a function 
\[
\sigma_{\dif}: \ZZZ_m\longrightarrow \pi_0\Stabilizer{\func_{\nSurf_0},\hnSurf_0},
\qquad
\sigma(j) = \bigl[ \gdif^{-j-\eta(\dif)} \circ \dif \circ \gdif^{j}|_{\nSurf_0} \bigr]
\]
$j=0,\ldots,m-1$.

Consider the following element belonging to $\pi_0\Stabilizer{\func_{\iSurf},\hiSurf} \times \bigl( \pi_0\Stabilizer{\func_{\nSurf_0}, \hnSurf_0} \wrm{m} \ZZZ\bigr)$:
\[
\isoStabfB(\gamma) = \Bigl( [\, \dif|_{\iSurf}\,]\, , \, \sigma_{\dif}\, , \, \eta(\dif) \Bigr).
\]
We claim that the correspondence $\gamma\longmapsto\isoStabfB(\gamma)$ is the desired isomorphism~\eqref{equ:main_isomorphism}.

\medskip

{\bf Step 1.}
First we show that {\em $\isoStabfB(\gamma)$ does not depend on a particular choice of a representative $\dif\in\gamma$ such that $\gdif^{-\eta(\dif)}\circ\dif$ is fixed on some neighbourhood of $\hcrNbh$.}

Indeed, let $\dif' \in \gamma$ be another element such that $\gdif^{-\eta(\dif')}\circ\dif'$ is fixed near $\hcrNbh$.
Then $\dif' = \dif = \gdif^{\eta(\dif)}$ near $\hcrNbh$ and $\dif'$ is isotopic to $\dif$ in $\StabilizerId{\func,\hbComp}$.

In particular, it follows from (1) of Lemma~\ref{lm:epi_SIsotId_f_hB__Z} that $\eta(\dif)=\eta(\dif')$.

Moreover, by Lemma~\ref{lm:isot_rel_hcrNbh} $\dif$ and $\dif'$ are isotopic in $\StabilizerId{\func,\hbComp}$ relatively some neighbourhood of $\hcrNbh$.
This implies that $\dif|_{\iSurf}$ is isotopic to $\dif'|_{\iSurf}$ relatively some neighbourhood of $\hiSurf$, and for each $j=0,\ldots,m-1$ the restriction $\gdif^{-j-\eta(\dif)} \circ \dif \circ \gdif^{j}|_{\nSurf_0}$ is isotopic to $\gdif^{-j-\eta(\dif)} \circ \dif' \circ \gdif^{j}|_{\nSurf_0}$ relatively some neighbourhood of $\hnSurf_0$.
In other words, 
\[ [\, \dif|_{\iSurf}\,]=[\, \dif'|_{\iSurf}\,] \in \pi_0\StabilizerIsotId{\func_{\iSurf}, \hiSurf}, \]
\[ [\, \gdif^{-j-\eta(\dif)} \circ \dif \circ \gdif^{j}|_{\nSurf_0}\,]=[\, \gdif^{-j-\eta(\dif)} \circ \dif' \circ \gdif^{j}|_{\nSurf_0} \,] \in \pi_0\Stabilizer{\func_{\nSurf_0}, \hnSurf_0}, \]
$j=0,\ldots,m-1$. Hence $\isoStabfB(\gamma)$ does not depend on a particular choice of such $\dif$.

\medskip

{\bf Step 2.}
{\em $\isoStabfB$ is a \textit{homomorphism}.}
Let $\dif_0,\dif_1\in\Stabilizer{\func,\hbComp}$.
We have to show that 
\[
\isoStabfB([\dif_0\circ\dif_1]) = \isoStabfB([\dif_0]) \cdot \isoStabfB([\dif_1]).
\]
Put $k_i = \eta(\dif_i)$, $i=0,1$.
Since $\eta$ is a homomorphism, $\eta(\dif_0\circ\dif_1) = k_0+k_1$.

By Step 1 we can assume that $\gdif^{-k_i}\circ\dif_i$ is fixed on $\hcrNbh$, $i=0,1$.
Define the following four functions 
\[ \sigma_0, \sigma_1,\sigma, \bar{\sigma}: \ZZZ_m\longrightarrow \pi_0\Stabilizer{\func_{\nSurf_0},\hnSurf_0} \] 
by
\begin{align*}
\sigma_0(j) &= \bigl[ \gdif^{-j-k_0} \circ \dif_i \circ \gdif^{j}|_{\nSurf_0} \bigr], &
\sigma_1(j) &= \bigl[ \gdif^{-j-k_1} \circ \dif_i \circ \gdif^{j}|_{\nSurf_0} \bigr], \\
\sigma(j) &= \bigl[ \gdif^{-j-k_0-k_1} \circ \dif_0\circ\dif_1 \circ \gdif^{j}|_{\nSurf_0} \bigr], &
\bar{\sigma}(j) &= \sigma_0(j+k_1)\circ\sigma_1(j)
\end{align*}
for $j=0,\ldots,m-1$.
Then
\begin{align*}
\isoStabfB([\dif_i]) &= \bigl([\,\dif_i|_{\iSurf}\,], \ \sigma_i, \  k_i \bigr), \qquad i=0,1, \\
\isoStabfB([\dif_0\circ\dif_1]) &= \bigl(\,[\dif_0\circ\dif_1|_{\iSurf}\,], \ \sigma, \  k_0+k_1 \bigr),
\end{align*}
and by the definition of multiplication 
\begin{align*}
\isoStabfB([\dif_0]) \circ \isoStabfB([\dif_1]) &=
\Bigl([\,\dif_0|_{\iSurf}\,], \ \sigma_0, \  k_0 \Bigr) \Bigl([\,\dif_1|_{\iSurf}\,], \ \sigma_1, \ k_1 \Bigr) \\
&= \Bigl([\,\dif_0|_{\iSurf}\,] \circ [\,\dif_1|_{\iSurf}\,], \ \bar\sigma, \  k_0+k_1 \Bigr)=
\Bigl([\,\dif_0 \circ \dif_1|_{\iSurf}\,], \ \bar\sigma, \  k_0+k_1 \Bigr).
\end{align*}
It remains to show that $\bar\sigma = \sigma$.
Let $j=0,\ldots,m-1$, then
\begin{align*}
\sigma(j) &= \bigl[ \gdif^{-j-k_0-k_1} \circ \dif_0\circ\dif_1 \circ \gdif^{j}|_{\nSurf_0} \bigr] \\
&= \bigl[\gdif^{-(j+k_1)-k_0}\circ\dif_0\circ \gdif^{j+k_1}|_{\nSurf_0}\bigr] \circ \bigl[ \gdif^{-j-k_1} \circ \dif_1 \circ \gdif^{j}|_{\nSurf_0}\bigr] \\
&= \sigma_0(j+k_1) \circ \sigma_1(j) = \bar{\sigma}(j).
\end{align*}
Thus $\isoStabfB$ is a homomorphism.

\medskip

{\bf Step 3.} 
{\em $\isoStabfB$ is a \textit{monomorphism}.}
Let $\dif\in\Stabilizer{\func,\hbComp}$ be such that $\gdif^{-\eta(\dif)}\circ\dif$ is fixed near $\hcrNbh$, and suppose that $[\dif]\in\ker(\isoStabfB)$.
This means that
\begin{align*}
 [ \dif|_{\iSurf} ] &= [\id_{\iSurf}] \in \pi_0\Stabilizer{\func_{\iSurf}, \hiSurf}, \\
 [ \gdif^{-j}\circ\dif\circ\gdif^{j}|_{\nSurf_0} ] &= [ \id_{\nSurf_0}] \in\pi_0\Stabilizer{\func_{\nSurf_0},\hnSurf_0}, \\
 \eta(\dif) &=0, 
\end{align*}
for $j=0,\ldots,m-1$. 
In other words, $\dif|_{\iSurf}$ is isotopic in $\StabilizerId{\func_{\iSurf},\hiSurf}$ to $\id_{\iSurf}$, and $\dif|_{\nSurf_j}$ is isotopic in $\StabilizerId{\func_{\nSurf_j},\hnSurf_j}$ to $\id_{\nSurf_j}$.
These isotopies give an isotopy between $\dif$ and $\id_{\Mman}$ in $\Stabilizer{\func,\hbComp}$.
Hence $[\dif]=[\id_{\Mman}]\in\Stabilizer{\func,\hbComp}$, and so $\ker(\isoStabfB)$ is trivial.

\medskip

{\bf Step 4.} 
{\em $\isoStabfB$ is \textit{surjective}.}
Let $\hdif\in\Stabilizer{\func_{\iSurf},\hiSurf}$, $\sigma:\ZZZ_m\to\pi_0\Stabilizer{\func_{\nSurf_0},\hnSurf_0}$, and $k\in\ZZZ$.
We have to find $\dif\in\Stabilizer{\func,\hbComp}$ with $\isoStabfB([\dif]) = ([\hdif], \sigma, k)$.
For each $j\in\ZZZ_m$ choose $\dif_j\in\Stabilizer{\func_{\nSurf_0},\hnSurf_0}$ such that $\sigma(j) = [\dif_j]$.
Due to 5) of Theorem~\ref{th:right_action_props} we can assume that $\hdif$ is fixed near $\hiSurf$ and each $\dif_j$ is fixed near $\hnSurf_0$.
Define $\dif$ by the formula:
\[
\dif(x) = 
\begin{cases}
\gdif^{k}(x), & x\in \crNbh, \\
\gdif^{k}\circ\hdif(x), & x\in \iSurf, \\
\gdif^{j+k}\circ\dif_j \circ \gdif^{-j}(x), & x\in \nSurf_j, \ j=0,\ldots,m-1.
\end{cases}
\]
Then it is easy to check that $\dif\in\Stabilizer{\func,\hbComp}$ and $\isoStabfB([\dif]) =([\hdif], \sigma, k)$.
Lemma~\ref{lm:pi0StabfB_sructure} is completed.
\end{proof}

\begin{lemma}\label{lm:pi0StabIsotId_simple_cases}
{\rm 1)}
Let $\func\in\FFF(S^1\times I,\Pman)$ be a map without critical points.
Then 
\begin{align*}
\pi_0 \StabilizerIsotId{\func,S^1\times 0} = \pi_0 \StabilizerIsotId{\func,\partial(S^1\times I)} = 0.
\end{align*}

{\rm 2)}
Let $\func\in\FFF(\Disk,\Pman)$ be a map having exactly one critical point, which therefore must be a local extreme.
\begin{itemize}
\item[\rmfamily (a)]
If $z$ is a {\bfseries non-degenerate} local extreme of $\func$, then $\pi_0\StabilizerIsotId{\func,\partial\Disk} = 0.$
\item[\rmfamily (b)]
Suppose $z$ is a {\bfseries degenerate} local extreme of $\func$.
Then $\pi_0 \StabilizerIsotId{\func,\partial\Disk} \cong \ZZZ$.
\end{itemize}
\end{lemma}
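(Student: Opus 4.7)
The plan is to exploit the product-type structure $\func$ induces away from critical points, together with a normal form at the extremum; throughout I use $\StabilizerIsotId{\func,\hbComp}=\Stabilizer{\func,\hbComp}$ from Lemma~\ref{lm:StabIsotId_Stab}(1). For part 1), since $\func$ has no critical points it is a submersion, so I can choose a trivialization $\Psi:S^1\times[a,b]\to S^1\times I$ with $\func\circ\Psi(x,t)=t$. Every $\dif\in\Stabilizer{\func,S^1\times 0}$ then has the form $\dif(x,t)=(\phi_t(x),t)$ for a smooth path $t\mapsto\phi_t\in\Diff(S^1)$ with $\phi_a=\id_{S^1}$, and the formula $H_s(x,t)=(\phi_{a+(1-s)(t-a)}(x),t)$ defines an isotopy to $\id$ inside $\Stabilizer{\func,S^1\times 0}$. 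Combined with Lemma~\ref{lm:StabIsotId_Stab}(2) this gives both equalities of part 1).

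For part 2)(a), a global Morse normal form lets me assume $\func=\rho(x^2+y^2)$ in polar coordinates on $\Disk$ for some smooth $\rho$ monotone on $[0,1]$, so that $SO(2)$ acts on $\Disk$ by $\func$-preserving rotations fixing $z$. Any $\dif\in\Stabilizer{\func,\partial\Disk}$ reads $\dif(r,\theta)=(r,\phi_r(\theta))$ with $\phi_1=\id$; applying fibrewise the canonical deformation retraction $\Diff(S^1)\to SO(2)$ produces an isotopy inside $\Stabilizer{\func,\partial\Disk}$ from $\dif$ to a rotation family $(r,\theta)\mapsto(r,\theta+\alpha(r))$. The continuous $SO(2)$-symmetry of $\func$ near $z$ ensures that $\alpha$ extends smoothly to $r=0$ (as an even function of $r^2$), and then the linear contraction $\alpha_s=(1-s)\alpha$ isotopes the rotation family to $\id_{\Disk}$ through $\Stabilizer{\func,\partial\Disk}$, giving $\pi_0\StabilizerIsotId{\func,\partial\Disk}=0$.

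For part 2)(b), Axiom~\AxCrPt\ forces the local model $\func_z$ to have only a finite rotational symmetry $\ZZZ_m$ for some $m\geq 2$, so the argument of (a) fails. I mimic the proof of Lemma~\ref{lm:epi_SIsotId_f_hB__Z}: let $\dCyl=\Disk\setminus\{z\}\cong S^1\times(0,1]$ and $p:\hdCyl\to\dCyl$ its universal cover, so $\hdCyl\cong\RRR\times(0,1]$. The fixed condition on $\partial\Disk$ gives, for each $\dif\in\Stabilizer{\func,\partial\Disk}$, a unique lift $\hdif$ fixed on $\RRR\times 1$, whose shift on covering translations defines $\eta'(\dif)\in\ZZZ$. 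The image $\eta'(\Stabilizer{\func,\partial\Disk})$ is nontrivial since an explicit generator $\tau$ exists: rotate by $2\pi/m$ on a small neighborhood of $z$ (which is $\func$-preserving by the $\ZZZ_m$-symmetry of $\func_z$) and interpolate by a smooth cut-off to identity near $\partial\Disk$. Dividing by the positive generator of this image produces an epimorphism $\eta:\Stabilizer{\func,\partial\Disk}\to\ZZZ$. Injectivity on $\pi_0$ follows in parallel to Lemma~\ref{lm:epi_SIsotId_f_hB__Z}(2): if $\eta(\dif)=\eta(\gdif)$, then after a $\func$-preserving isotopy I can arrange $\dif=\gdif$ on a neighborhood of $z$ (via an analog of Lemma~\ref{lm:uTriv_dTriv_prop}), and the leftover discrepancy on the cylindrical complement is some power of a Dehn twist, forced to vanish by the equality $\eta(\dif^{-1}\gdif)=0$; the remaining trivialization on the annulus is then supplied by part 1).

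The main technical obstacle lies in 2)(b): the generator $\tau$ must be constructed smooth at $z$ despite only a finite rotational symmetry of $\func_z$, which I handle by choosing the cut-off constant on a small neighborhood of $z$ where $\tau$ equals the exact $\ZZZ_m$-rotation of $\func_z$, pushing the interpolation onto an annular region bounded away from the critical point. Verifying injectivity of $\eta$ on $\pi_0$ then requires combining this local control with the annular Dehn-twist uniqueness as in Lemma~\ref{lm:epi_SIsotId_f_hB__Z}(2); in contrast, in (a) the continuous $SO(2)$-symmetry makes every such twist isotopic to the identity, collapsing the image of $\eta$ to $0$.
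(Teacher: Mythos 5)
Your part 1) is complete and correct: the level sets of a critical-point-free map on the cylinder are the parallel circles, the stabilizer rel one boundary circle is the space of paths in $\Diff(S^1)$ based at the identity, and your straight-line contraction together with Lemma~\ref{lm:StabIsotId_Stab}(2) finishes it. (Note that the paper gives no argument for this lemma at all, only citations to earlier work, so you are supplying content the paper omits.) The difficulty is in part 2), where the two steps you assert without proof are precisely the nontrivial content that the paper outsources to \cite{Maksymenko:TA:2003,Maksymenko:CEJM:2009,Maksymenko:MFAT:2009}. In 2(a), the claim that the fibrewise retraction of $\phi_r$ onto a rotation $\theta\mapsto\theta+\alpha(r)$ produces an $\alpha$ extending smoothly over $r=0$ is exactly the hard point: a family of rotations written in polar coordinates is smooth at the origin only when $\alpha$ is a smooth function of $r^2$, and nothing in your construction guarantees this. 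Showing that every level-preserving diffeomorphism near a non-degenerate extreme is a smooth shift along the rotation flow is the actual theorem of the cited papers, not a consequence of the ``continuous $SO(2)$-symmetry''.

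In 2(b) there is a more structural gap. You define $\eta'(\dif)$ as the ``shift on covering translations'' of the lift $\hdif$ of $\dif|_{\dCyl}$ to $\hdCyl\cong\RRR\times(0,1]$ fixed on $\RRR\times 1$. But in Lemma~\ref{lm:epi_SIsotId_f_hB__Z} the integer is read off from how the lift permutes the intervals $J_i$ lying over the complement of the finite puncture set $F$ on the inner boundary; here the inner end of $\dCyl$ is a single deleted point, there is no set $F$, and a lift of a diffeomorphism isotopic to the identity simply commutes with the deck transformations, so the stated recipe produces no integer. The invariant must instead be extracted from the behaviour at $z$: one shows that the lifted angular displacement $u(\theta,r)$ of $\hdif$ converges as $r\to 0$ to the lifted displacement of $D\dif_z$, that $D\dif_z$ preserves the leading homogeneous jet $\func_z$ and hence lies in its finite cyclic linear symmetry group $\ZZZ_m$, and that the total rotation accumulated from $\partial\Disk$ to $z$ is therefore quantized in $\tfrac{2\pi}{m}\ZZZ$. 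This convergence-and-quantization statement, together with the fact that vanishing of the invariant allows one to make $\dif$ the identity near $z$ by a level-preserving isotopy, is exactly the result of \cite{Maksymenko:MFAT:2009} that the paper invokes. Your construction of the generator $\tau$ and the reduction of injectivity to part 1) on the residual annulus are the right architecture, but as written the proof of 2(b) rests on an invariant that has not been defined.
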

\begin{proof}
These statements are contained in the previous papers by the author, though they were not explicitly formulated.
In fact, statement 1) follows from~\cite[Lemma~4.12(2,3)]{Maksymenko:AGAG:2006}, statement 2(a) from~\cite[Eq~(25)]{Maksymenko:TA:2003} or from results of~\cite{Maksymenko:CEJM:2009, Maksymenko:hamv2}, and statement 2(b) from results of~\cite{Maksymenko:MFAT:2009}.
We leave the details to the reader.
\end{proof}

\subsection{Proof of Proposition~\ref{pr:pi0StabfB_in_ExtZ}}\label{sect:proof_prop_pr:pi0StabfB_in_ExtZ}
Due to (2) of Lemma~\ref{lm:StabIsotId_Stab} it suffices to prove that $\pi_0\StabilizerIsotId{\func,\hbComp} \in \classP$.

If $\crLev$ is either empty or consists of a unique point, then by Lemma~\ref{lm:pi0StabIsotId_simple_cases} $\pi_0\StabilizerIsotId{\func,\hbComp}$ is either trivial or isomorphic with $\ZZZ$.
Therefore it belongs to the class $\classP$.

Suppose now that $\crLev$ consists of more than one point, and let $n$ be the total number of critical points of $\func$ in all of $\Mman$.
We will use induction on $n$.

If $n=0$, then we are in the case 1) of Lemma~\ref{lm:pi0StabIsotId_simple_cases} which is already considered.
Suppose Proposition~\ref{pr:pi0StabfB_in_ExtZ} is proved for all $n<k$ for some $k\geq1$.
Let us establish it for $n=k$.

Preserving notation of Lemma~\ref{lm:pi0StabfB_sructure} let $\crLev$ be the ``closest'' to $\hbComp$ critical component of some level set of $\func$, see beginning of \S\ref{sect:funcs_2D_Cyl}.
Since $\iSurf$ is a disjoint union of surfaces $\iSurf_i$, $i=1,\ldots,a$, as well as $\nSurf_0$ is a disjoint union of $\nSurf_{0,\qind}$, $\qind=1,\ldots,b$, it follows that 
\begin{align*}
  \pi_0\Stabilizer{\func_{\iSurf},\hiSurf} &\cong 
    \mathop{\times}\limits_{i=1}^{a} \pi_0\Stabilizer{\func_{\iSurf_i},\hiSurf_i},
&
  \pi_0\StabilizerIsotId{\func_{\nSurf_0},\hiSurf_0} &\cong
     \mathop{\times}\limits_{\qind=1}^{b} \pi_0\Stabilizer{\func_{\nSurf_{0,\qind}},\hnSurf_{0,\qind}},
\end{align*}
whence from Lemma~\ref{lm:pi0StabfB_sructure} we get an isomorphism
\[
\pi_0\Stabilizer{\func,\hcrNbh} \ \cong \
\left(\mathop{\times}\limits_{i=1}^{a} \pi_0\Stabilizer{\func_{\iSurf_i},\hiSurf_i}\right)
\times 
\left( \left(
\mathop{\times}\limits_{\qind=1}^{b} \pi_0\Stabilizer{\func_{\nSurf_{0,\qind}},\hnSurf_{0,\qind}} 
\right) \wrm{m} \ZZZ \right).
\]
As each pair $(\Mman,\hbComp)$, $(\iSurf_i, \hiSurf_i)$, and $(\nSurf_{j,\qind}, \hnSurf_{j,\qind})$ is diffeomorphic either with $(\Disk,\partial\Disk)$ or with $(S^1\times I, S^1\times0)$, it follows from (1) of Lemma~\ref{lm:StabIsotId_Stab} that
\begin{align*}
\StabilizerIsotId{\func,\hcrNbh} &= \Stabilizer{\func,\hcrNbh}, 
&
\StabilizerIsotId{\func_{\iSurf_i},\hiSurf_i} &= \Stabilizer{\func_{\iSurf_i},\hiSurf_i},
&
\StabilizerIsotId{\func_{\nSurf_{0,\qind}},\hnSurf_{0,\qind}} &= \Stabilizer{\func_{\nSurf_{0,\qind}},\hnSurf_{0,\qind}},
\end{align*}
so we also have an isomorphism
\[
\pi_0\StabilizerIsotId{\func,\hcrNbh} \ \cong \
\left(\mathop{\times}\limits_{i=1}^{a} \pi_0\StabilizerIsotId{\func_{\iSurf_i},\hiSurf_i}\right)
\times 
\left( \left(
\mathop{\times}\limits_{\qind=1}^{b} \pi_0\StabilizerIsotId{\func_{\nSurf_{0,\qind}},\hnSurf_{0,\qind}} 
\right) \wrm{m} \ZZZ \right).
\]
Notice that each of the restrictions $\func|_{\iSurf_i}$ and $\func|_{\nSurf_{0,\qind}}$ has less critical points than $n$, whence by inductive assumption $\pi_0\StabilizerIsotId{\func_{\iSurf_i},\hiSurf_i}$ and $\pi_0\StabilizerIsotId{\func_{\nSurf_{0,\qind}},\hnSurf_{0,\qind}}$ belong to the class $\classP$.
Hence $\pi_0\StabilizerIsotId{\func,\hcrNbh}\in\classP$ as well.
Proposition~\ref{pr:pi0StabfB_in_ExtZ} is completed.

\section{Proof of Theorem~\ref{th:structure_of_pi1_fOrb}}\label{sect:proof:th:structure_of_pi1_fOrb}
Let $\Mman$ be a compact orientable surface distinct from $\Sphere$ and $\Torus$.
Then $\DiffId(\Mman,\partial\Mman)$ is contractible, and by 1) and 4) of Theorem~\ref{th:right_action_props} for each  $\func\in\FFF(\Mman,\Pman)$ we have the following isomorphisms
\[
\pi_1\fOrb  \ \cong \ \pi_1\Orbit{\func, \partial\Mman} \ \cong \ \pi_0\StabilizerIsotId{\func,\partial\Mman}.
\]

Therefore it suffices to prove that the class $\classP$ coincides with each of the following classes of groups:
\begin{align*}
&\{ \ \pi_0\StabilizerIsotId{\func,\partial\Mman} \ \mid \ \func\in\Morse(\Mman,\Pman) \ \}, & 
&\{ \ \pi_0\StabilizerIsotId{\func,\partial\Mman} \ \mid \ \func\in\FFF(\Mman,\Pman) \ \}.
\end{align*}

\begin{lemma}
For each $\func\in\FFF(\Mman,\Pman)$ the group $\pi_0\StabilizerIsotId{\func,\partial\Mman}$ belongs to $\classP$.
\end{lemma}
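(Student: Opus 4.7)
The plan is to reduce the claim to Proposition~\ref{pr:pi0StabfB_in_ExtZ} via the direct-product decomposition of statement 4) of Theorem~\ref{th:right_action_props}, applied with $\Xman = \partial\Mman$.

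First I would verify that the hypothesis of 4) holds in our setting. If $\partial\Mman \neq \varnothing$, then $\dim\Xman = 1 > 0$, and the ``for instance'' clause of 4) applies. If instead $\partial\Mman = \varnothing$, then $\Mman$ is a closed orientable surface distinct from $\Sphere$ and $\Torus$, hence of genus $\geq 2$, so $\chi(\Mman) < 0$ and statement 4) again applies.

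Statement 4) then provides finitely many mutually disjoint $\func$-adopted subsurfaces $\Bman_1,\ldots,\Bman_n$, each diffeomorphic to $\Disk$, $S^1\times I$, or $\MobiusBand$, together with an isomorphism
\[
\pi_0\StabilizerIsotId{\func,\partial\Mman} \ \cong \ \mathop{\times}_{i=1}^{n}\pi_0\StabilizerIsotId{\func_i,\hat{\Bman_i}},
\]
where $\func_i = \func|_{\Bman_i}$ and $\hat{\Bman_i} = (\Bman_i \cap \Xman^0)\cup \partial\Bman_i$. Since $\Xman = \partial\Mman$ has no $0$-dimensional components, $\Xman^0 = \varnothing$ and hence $\hat{\Bman_i} = \partial\Bman_i$. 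Moreover, because $\Mman$ is orientable it cannot contain a subsurface diffeomorphic to $\MobiusBand$, so each $\Bman_i$ is either a $2$-disk or a cylinder.

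Now I would apply Proposition~\ref{pr:pi0StabfB_in_ExtZ} to each pair $(\Bman_i,\func_i)$ to conclude that $\pi_0\StabilizerIsotId{\func_i,\partial\Bman_i}\in\classP$. Since $\classP$ is closed under finite direct products (axiom (2) of its definition), the right-hand side of the above isomorphism lies in $\classP$, and hence so does $\pi_0\StabilizerIsotId{\func,\partial\Mman}$. There is essentially no obstacle at this stage; the genuine technical content has already been absorbed into Proposition~\ref{pr:pi0StabfB_in_ExtZ} (the induction on the number of critical points carried out in \S\ref{sect:funcs_2D_Cyl}), and the present lemma is a short assembly argument on top of it.
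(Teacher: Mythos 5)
Your proposal is correct and follows exactly the paper's own argument: apply statement 4) of Theorem~\ref{th:right_action_props} with $\Xman=\partial\Mman$ to split $\pi_0\StabilizerIsotId{\func,\partial\Mman}$ into a finite direct product over disks and cylinders, then invoke Proposition~\ref{pr:pi0StabfB_in_ExtZ} on each factor and the closure of $\classP$ under direct products. The only difference is that you explicitly verify the Euler-characteristic hypothesis of 4) and the exclusion of M\"obius bands, which the paper leaves implicit.
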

\begin{proof}
By 4) of Theorem~\ref{th:right_action_props} there exist finitely many disjoint subsurfaces $\iSurf_1,\ldots,\iSurf_n \subset \Mman$ each $\iSurf_i$ is diffeomorphic either with $D^2$ or with $S^1\times I$, and such that
\[
\pi_0\StabilizerIsotId{\func,\partial\Mman} \cong
     \mathop{\times}\limits_{i=1}^{n} \pi_0\StabilizerIsotId{\func_{\iSurf_i},\partial\iSurf_i}.
\]
But by Proposition~\ref{pr:pi0StabfB_in_ExtZ} $\pi_0\StabilizerIsotId{\func_{\iSurf_i},\partial\iSurf_i} \in \classP$ for all $i$, whence $\pi_0\StabilizerIsotId{\func,\partial\Mman}\in \classP$ as well.
\end{proof}

For the converse statement we make a remark concerning the structure of groups from $\classP$.
By definition a group $G$ belongs to the class $\classP$ if and only if it can be obtained from the unit group $\{1\}$ by finitely many operations of direct product $\times$ and wreath product $\wrm{m}\ZZZ$ from the top with $\ZZZ$.
We will call such a presentation of $G$ a \emph{$\classP$-presentation}.

A priori a $\classP$-presentation of $G$ is not unique, e.g. $\ZZZ \ \cong \ 1 \wrm{1}\ZZZ \ \cong \ 1  \wrm{3}\ZZZ$.
Given a $\classP$-presentation $\xi_G$ of $G$ denote by $\mu(\xi_G)$ the total number of signs $\times$ and $\wrm{m}\ZZZ$ for some $m\geq1$, used in $\xi_G$.
For example, a group $G=\ZZZ^2 \times (\ZZZ\wrm{4}\ZZZ)$ has a $\classP$-presentation
\[
\xi_G: \ G \ \cong \ (1 \wrm{1}\ZZZ) \times (1 \wrm{1}\ZZZ) \times \bigl( (1 \wrm{1}\ZZZ)  \wrm{4} \ZZZ \bigr).
\]
with $\mu(\xi_G) = 6$.

\begin{lemma}\label{lm:realization_of_pi1}
For each $G\in\classP$ then there exists an $\func\in\Morse(\Mman,\Pman)$ such that
\[ \pi_0\StabilizerIsotId{\func,\partial\Mman} \cong G.\]
\end{lemma}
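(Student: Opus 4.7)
The plan is to proceed by induction on the complexity $\mu(\xi_G)$ of a fixed $\classP$-presentation $\xi_G$ of $G$. The main step is to realise each $G \in \classP$ as $\pi_0 \StabilizerIsotId{\gfunc, \partial\Disk}$ for a Morse map $\gfunc:\Disk\to\Pman$; the statement on $\Mman$ then follows by fixing a reference Morse map $\func_0: \Mman \to \Pman$ whose Theorem~\ref{th:right_action_props}(4) decomposition consists entirely of disks with a single non-degenerate maximum (contributing $0$ by Lemma~\ref{lm:pi0StabIsotId_simple_cases}(2a)) and cylinders with no critical points (contributing $0$ by Lemma~\ref{lm:pi0StabIsotId_simple_cases}(1)), and then replacing $\func_0$ on one chosen disk-piece by $\gfunc$.

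The base case $\mu(\xi_G) = 0$ is $G = \{1\}$, which is realised on $\Disk$ by a single non-degenerate maximum. For the inductive step with top-most operation a wreath product $G = A \wrm{m}\ZZZ$, use the inductive hypothesis to realise $A$ on some $\Disk_A$ by $\gfunc_A$. Construct a Morse map $\gfunc:\Disk\to\Pman$ whose critical level set $\crLev$ closest to $\partial\Disk$ is a $1$-complex built from $m$ non-degenerate saddles joined cyclically so as to be $\ZZZ_m$-rotationally symmetric, and such that $\Disk\setminus\crLev$ splits into the annulus $\bCyl$ adjacent to $\partial\Disk$ together with $m$ interior disks $\nSurf_0,\ldots,\nSurf_{m-1}$ cyclically permuted by the rotation. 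Embed a copy of $(\Disk_A,\gfunc_A)$ in each $\nSurf_j$. Then Lemma~\ref{lm:pi0StabfB_sructure} applies with $a=0$ and $b=1$, and a direct count of the $\crLev$-edges adjacent to a boundary component of the $\func$-regular neighbourhood verifies that the modulus produced in Lemma~\ref{lm:epi_SIsotId_f_hB__Z}(1) equals $m$, giving
\[
\pi_0\StabilizerIsotId{\gfunc,\partial\Disk} \ \cong \ A \wrm{m} \ZZZ.
\]

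For the direct-product step $G = A\times B$, one first reduces to the wreath-product case whenever $G$ admits a $\classP$-presentation of the form $G \cong H\wrm{1}\ZZZ = H\times \ZZZ$ (i.e.~when $G$ has a $\ZZZ$-direct factor); otherwise, realise $A$ and $B$ on disjoint sub-disks inside a larger disk $\Disk$ with the outer critical level chosen to have two invariant components ($a=2$, $m=1$, $b=0$ in Lemma~\ref{lm:pi0StabfB_sructure}) carrying the realizations of $A$ and $B$ respectively, and absorb the extra $\ZZZ$ factor arising from the outer critical level by a corresponding adjustment of the $\classP$-presentation.

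The main obstacle is the wreath-product step: one must exhibit an explicit Morse critical level $\crLev$ with the required $\ZZZ_m$-symmetry and verify combinatorially that $\eta$ in Lemma~\ref{lm:epi_SIsotId_f_hB__Z}(1) has kernel of index exactly $m$, essentially by counting edges of $\crLev$ adjacent to $\partial \crNbh$. A secondary obstacle is handling the direct-product case cleanly when $G$ has no $\ZZZ$-direct factor, and constructing the reference Morse map $\func_0$ on an arbitrary admissible $\Mman$ whose Theorem~\ref{th:right_action_props}(4) decomposition yields only trivially-stabilised pieces.
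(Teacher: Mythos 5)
Your overall strategy --- induction on $\mu(\xi_G)$, realization on the disk by means of a rotationally symmetric critical level, and reduction of a general $\Mman$ to a single disk piece --- is the same as the paper's. But the direct-product branch of your induction has a genuine gap. The configuration you propose for $G=A\times B$ (outermost critical level with $a=2$ invariant sub-disks, $m=1$, $b=0$) yields, by Lemma~\ref{lm:pi0StabfB_sructure}, the group $A\times B\times\ZZZ$: the extra $\ZZZ$ coming from the Dehn twist in the annulus adjacent to $\partial\Disk$ is unavoidable and cannot be ``absorbed by a corresponding adjustment of the $\classP$-presentation''. Every group in $\classP$ is finitely generated, so $H\times\ZZZ\not\cong H$ for any such $H$ (the torsion-free rank of the abelianization strictly increases), and in exactly the case you reserve this construction for --- $G$ with no $\ZZZ$-direct factor --- you realize the wrong group. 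The paper's resolution is a normal form that your argument is missing: every nontrivial $G\in\classP$ can be rewritten either as $A\times(B\wrm{m}\ZZZ)$ with $m\geq2$ or as $A\times\ZZZ$, where $A$ and $B$ admit $\classP$-presentations of strictly smaller complexity. One then realizes both factors with a \emph{single} critical level: the invariant complementary pieces carry $A$ and the $m$ cyclically permuted pieces carry $B$, so that the one $\ZZZ$ produced by the outer annulus is precisely the $\ZZZ$ of the topmost wreath product, and no spurious factor appears.

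A secondary error: the configuration $a=0$, $b=1$ you describe for the pure wreath step cannot occur. A connected critical level $\crLev$ with $m$ non-degenerate saddles, viewed in the capped sphere $\hMman$, has $m$ vertices and $2m$ edges, hence $m+2$ complementary regions; besides the region containing $\partial\Disk$ and the $m$ permuted disks there is always at least one further invariant region, so $a\geq 1$. The correct minimal model (the paper's Figure~\ref{fig:pi1_disk_f_g_wr_zn}) takes $a=1$, with the extra invariant disk containing a single non-degenerate minimum so that it contributes trivially --- and that invariant slot is exactly what lets one plant $A$ there and handle $A\times(B\wrm{m}\ZZZ)$ in one step. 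Your base case, your wreath construction for $m\geq2$ (once corrected to $a=1$), and your reduction of a general $\Mman$ to the disk all agree with the paper.
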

\begin{proof}
{\bf Case $\Mman=\Disk$ or $S^1\times I$.}
If $G=\{1\}$ is a unit group, we take $\func$ to be a Morse map from 1) or 2a) of Lemma~\ref{lm:pi0StabIsotId_simple_cases} according to $\Mman$.
Then $\pi_0\StabilizerIsotId{\func,\partial\Mman} \cong G  = \{1\}$.

Suppose that we proved our lemma for all groups $A \in\classP$ having a $\classP$-presentation $\xi_A$ with $\mu(\xi_A)<n$ and let $G \in \classP$ be a group having a $\classP$-presentation $\xi_G$ with $\mu(\xi_G)=n$.
It follows from the definition of class $\classP$ that then either
\begin{itemize}
\item[(i)]
there exist $A,B\in \classP$ and $m\geq2$, such that $G \cong A \times (B\wrm{m}\ZZZ)$, where $A$ and $B$ have $\classP$-presentations $\xi_A$ and $\xi_B$ with $\mu(\xi_A), \mu(\xi_B) < \mu(\xi_G)$, or
\item[(ii)]
there exist $A\in \classP$ such that $G \cong A \times \ZZZ$, where $A$ has a $\classP$-presentation $\xi_A$ with $\mu(\xi_A) < \mu(\xi_G)$.
\end{itemize}

First assume that $\Mman=\Disk$.

(i) Suppose $G \cong A \times (B\wrm{m}\ZZZ)$.
Define a Morse function $\varphi:\Mman\to\Pman$, as it is shown in Figure~\ref{fig:pi1_disk_f_g_wr_zn}(a) for $m=3$.
So $\varphi$ has one local minimum $x$ and $m$ local maximums $y_0,\ldots, y_{m-1}$ satisfying $\varphi(y_0) = \cdots= \varphi(y_{m-1})$ and there exists a diffeomorphism $\gdif\in\Stabilizer{\func,\partial M}$ that cyclically interchange these points, i.e.\! $\gdif(y_j) = y_{j+1 \ \mathrm{mod} \ m}$.
Let $\iSurf$ be a $\varphi$-regular disk neighbourhood of $x$, $\nSurf_0$ be a $\varphi$-regular disk neighbourhood of $y_0$, and $\nSurf_j = \gdif^j(y_j)$, $j=1,\ldots,m-1$.
\begin{figure}[ht]
\centerline{\includegraphics[height=4cm]{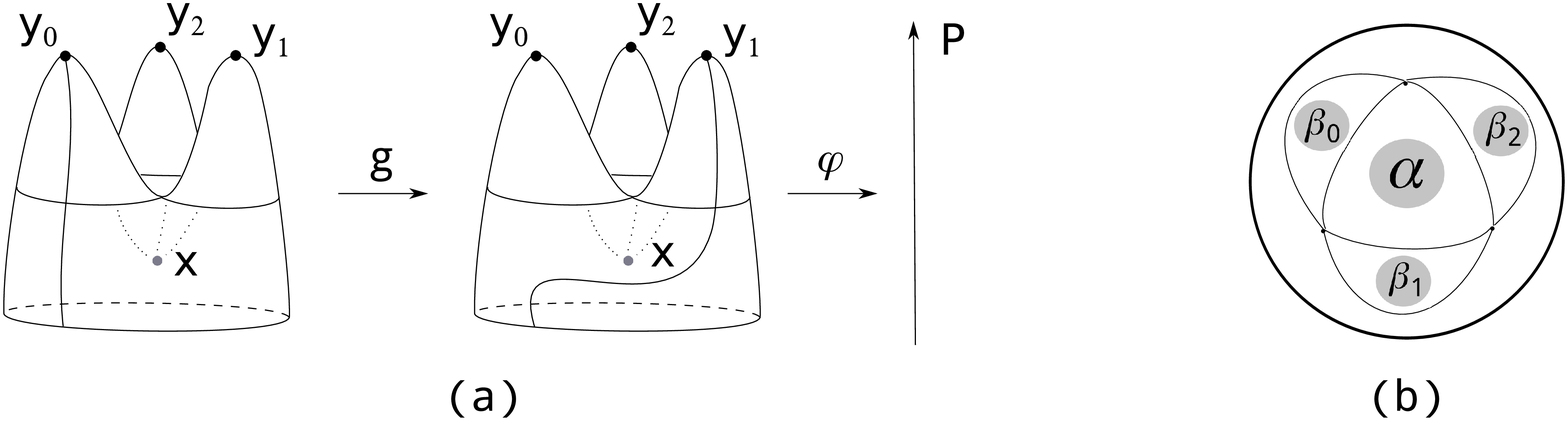}}
\caption{$\Mman=\Disk$. Case (i)}
\protect\label{fig:pi1_disk_f_g_wr_zn}
\end{figure}
As $\mu(\xi_A), \mu(\xi_B) < \mu(\xi_G)$, we have by induction that there exist $\alpha\in\Morse(\iSurf,\Pman)$ and $\beta\in\Morse(\nSurf_0,\Pman)$ such that 
\begin{align*}
A &\cong \pi_0\StabilizerIsotId{\alpha,\partial\iSurf}, &
B &\cong \pi_0\StabilizerIsotId{\beta,\partial\nSurf_0}.
\end{align*}
Not loosing generality, one can assume that $\alpha=\varphi$ in a neighbourhood of $\partial\iSurf$ and $\beta=\varphi$ in a neighbourhood of $\partial\nSurf$.
Replace $\varphi$ with $\alpha$ on $\iSurf$, with $\beta_j = \beta\circ\gdif^{-j}$ on $\nSurf_j$, $j=0,\ldots,m-1$, and denote the obtained new map by $\func$, see Figure~\ref{fig:pi1_disk_f_g_wr_zn}(b).
Then $\func\in\Morse(\Mman,\Pman)$ and it follows from Proposition~\ref{pr:pi0StabfB_in_ExtZ} that 
\[
\pi_0\StabilizerIsotId{\func,\partial\Mman} \cong A \times(B\wrm{m}\ZZZ) \cong G.
\]

(ii) Suppose now $G \cong A \times \ZZZ$.
Define a Morse function $\varphi:\Mman\to\Pman$ having two local maximums $x$ and $y$ such that $\varphi(x) \not= \varphi(y)$, see Figure~\ref{fig:pi1_disk_f_zn}(a).
\begin{figure}[ht]
\centerline{\includegraphics[height=4cm]{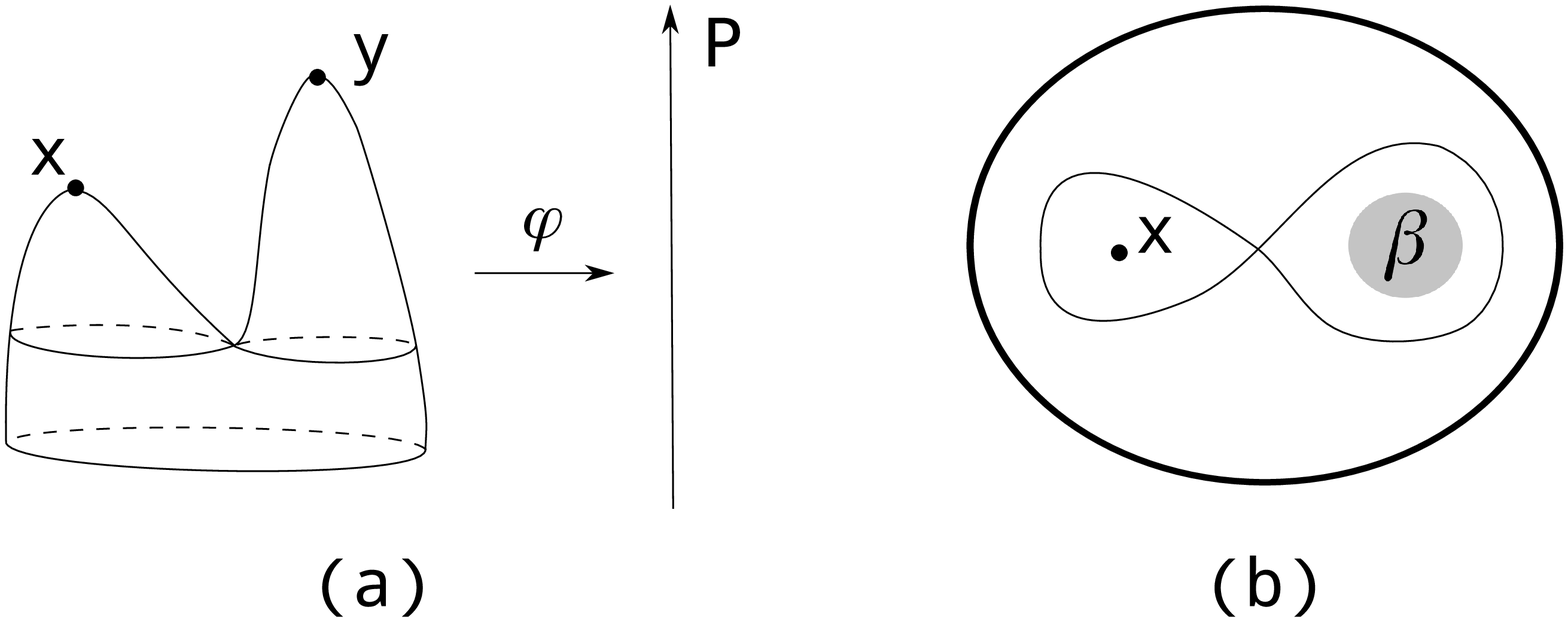}}
\caption{$\Mman=\Disk$. Case (ii)}
\protect\label{fig:pi1_disk_f_zn}
\end{figure}
Let $\nSurf$ be a $\varphi$-regular disk neighbourhood of $y$ such that $\varphi(x) \not\in \varphi(\nSurf)$.
Since $\mu(\xi_A) < \mu(\xi_G)$, it follows by induction that there exist $\beta\in \Morse(\nSurf,\Pman)$ such that $A \cong \pi_0\StabilizerIsotId{\func,\partial\nSurf}$ and $\alpha=\varphi$ near $\partial\nSurf$.
Now replace $\varphi$ with $\beta$ on $\nSurf$ and denote the obtained map by $\func$.
Then $\func\in\Morse(\Mman,\Pman)$ and it follows from Proposition~\ref{pr:pi0StabfB_in_ExtZ} and Lemma~\ref{lm:pi0StabIsotId_simple_cases} 2(a) that
\[
\pi_0\StabilizerIsotId{\func,\partial\Mman} \cong A \times \ZZZ \cong G.
\]

\medskip

For $\Mman=S^1\times I$, the proof of the cases (i) and (ii) is similar to the case of $\Disk$, and is illustrated in Figure~\ref{fig:pi1_cyl}.
We leave the details for the reader.
\begin{figure}[ht]
\begin{center}
\begin{tabular}{cc}
\includegraphics[height=3cm]{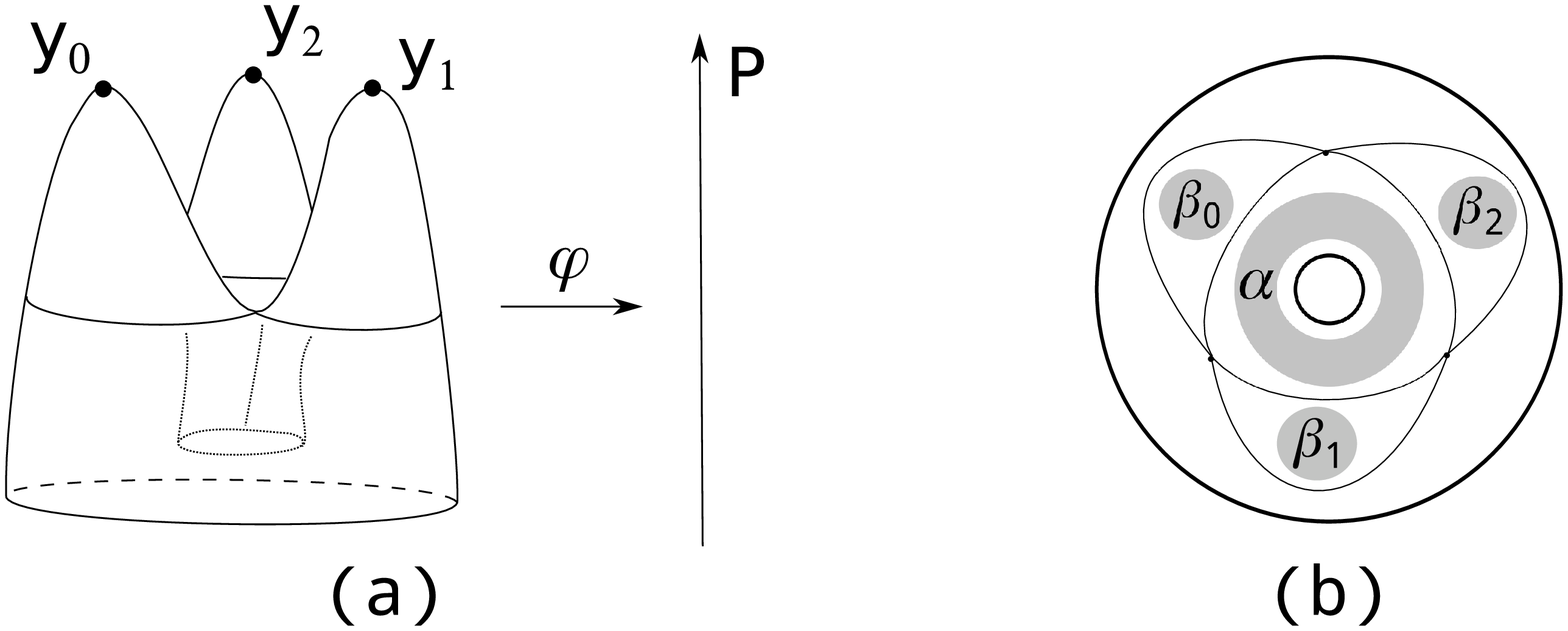} \\
Case (i) \\ \\
\includegraphics[height=3cm]{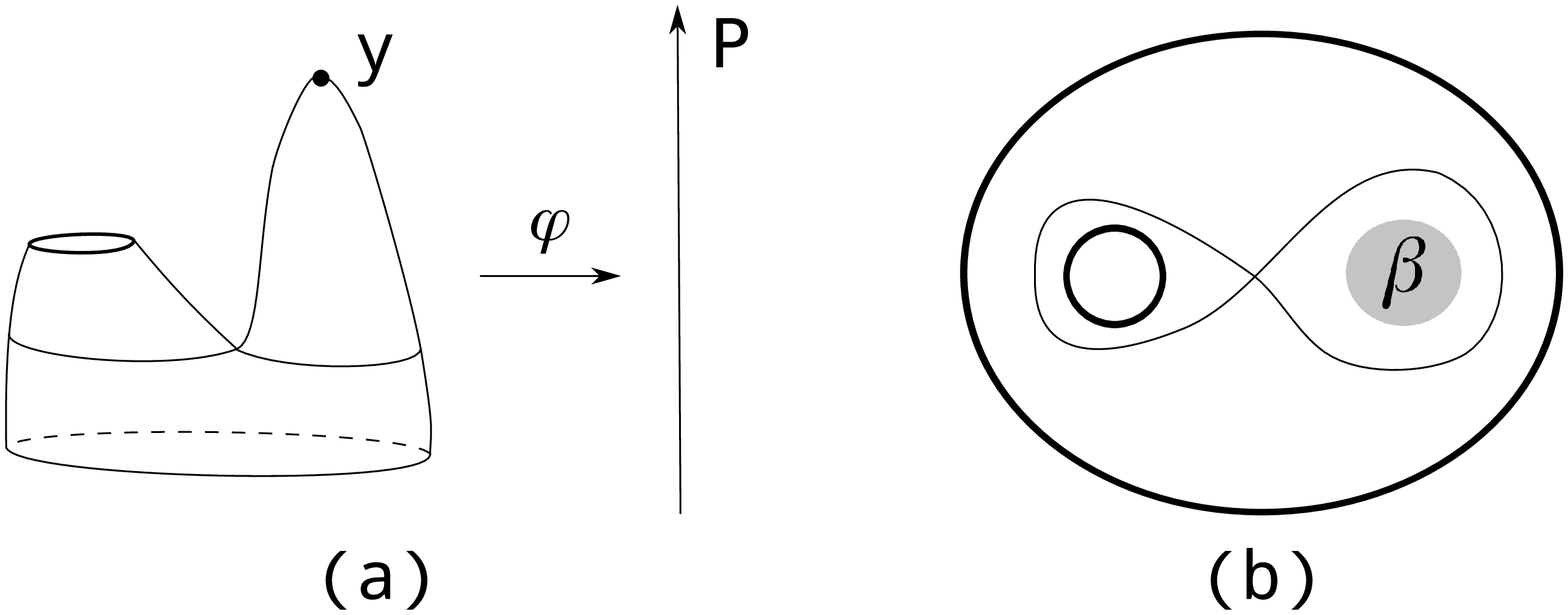} \\
Case (ii)
\end{tabular}
\end{center}
\caption{$\Mman=S^1\times I$}
\protect\label{fig:pi1_cyl}
\end{figure}

Now let $\Mman$ be an arbitrary compact orientable surface distinct from $\Sphere$, $\Torus$, $\Disk$, and $S^1\times I$.
Choose a Morse function $\varphi:\Mman\to\Pman$ such that 
\begin{itemize}
\item
all critical points of $\varphi$ of index $1$ belongs to the same critical level-set of $\varphi$;
\item
the values of $\varphi$ at distinct boundary components and distinct local extremes of $\varphi$ are distinct,
\end{itemize}
see Figure~\ref{fig:pi1_gen_case}.
\begin{figure}[ht]
\centerline{\includegraphics[height=4cm]{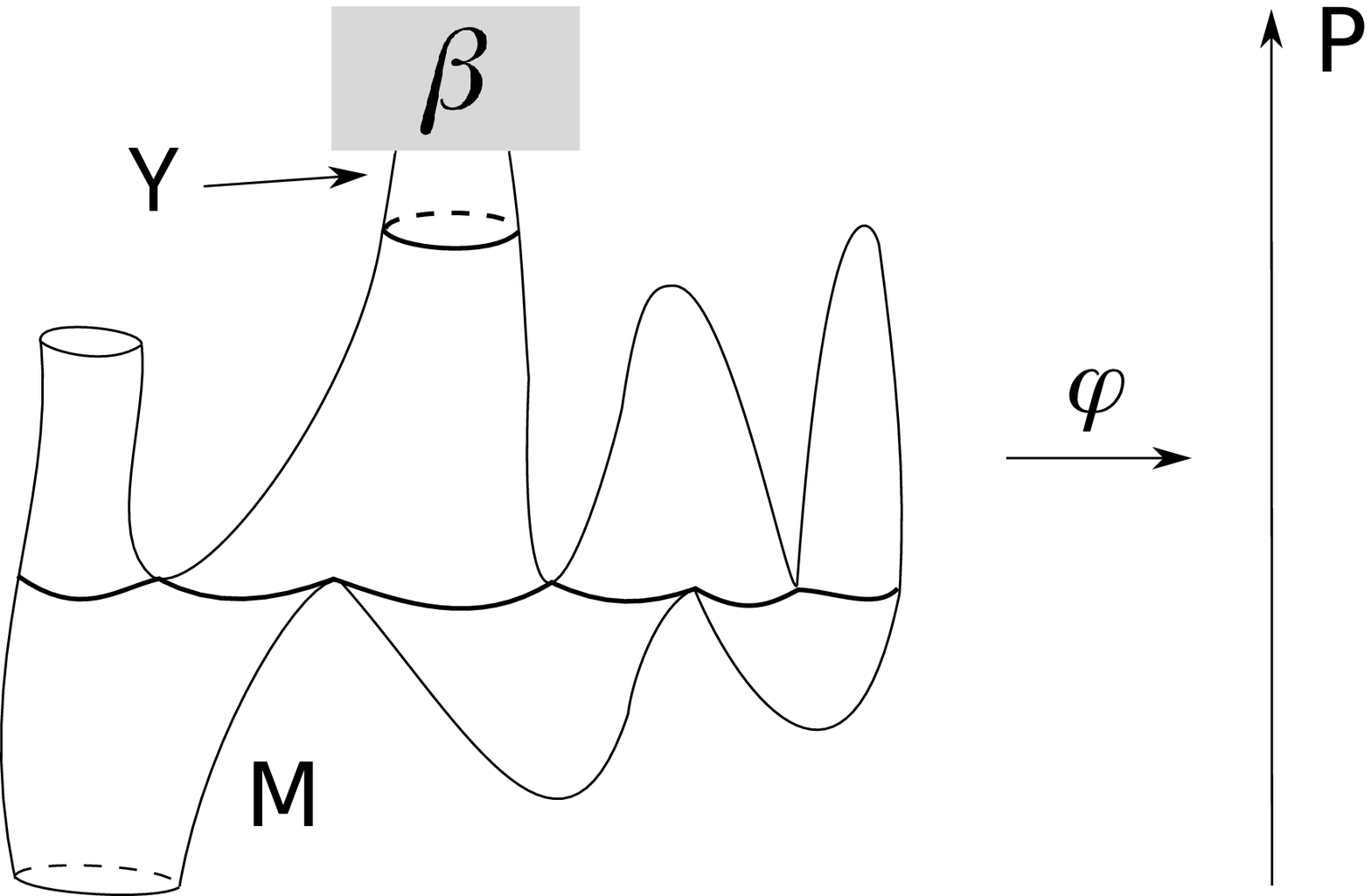}}
\caption{General case}
\protect\label{fig:pi1_gen_case}
\end{figure}
Fix some local extreme $y$ of $\varphi$ and let $\nSurf$ be a $\varphi$-regular disk neighbourhood of $y$.

Let $G\in\classP$.
Since the theorem is already proved for a disk $\Disk \simeq \nSurf$, there exists $\beta\in\FFF(\nSurf,\Pman)$ with $\pi_0\StabilizerIsotId{\beta,\partial\nSurf} \cong G$ and $\beta = \varphi$ in some neighbourhood of $\partial\nSurf$.
Replace $\varphi$ with $\beta$ on $\nSurf$ and denote the obtained map by $\func$.
Then $\func\in\Morse(\Mman,\Pman)$ and it follows from Proposition~\ref{pr:pi0StabfB_in_ExtZ} that $\pi_0\StabilizerIsotId{\varphi,\partial \Mman} \cong \pi_0\StabilizerIsotId{\func,\partial\nSurf}\cong G$.
Lemma~\ref{lm:realization_of_pi1} and Theorem~\ref{th:structure_of_pi1_fOrb} completed.
\end{proof}

\subsection{Acknowledgements}
The author is grateful to B.~Feshchenko of fruitful discussions.


\def\cprime{$'$}
\providecommand{\bysame}{\leavevmode\hbox to3em{\hrulefill}\thinspace}
\providecommand{\MR}{\relax\ifhmode\unskip\space\fi MR }
\providecommand{\MRhref}[2]{%
  \href{http://www.ams.org/mathscinet-getitem?mr=#1}{#2}
}
\providecommand{\href}[2]{#2}

\end{document}